\newtheorem{thm}{Theorem}[section]
\newtheorem*{thm*}{Theorem}
\newenvironment{customthm}[1]
  {\innercustomthm}
  {\endinnercustomthm}
\newenvironment{customcor}[1]
  {\innercustomcor}
  {\endinnercustomcor}
\newtheorem{lem}[thm]{Lemma}
\theoremstyle{definition}
\newtheorem{defn}[thm]{Definition}
\newtheorem{rem}[thm]{Remark}
\newtheorem*{thm1.2}{\textrm{Theorem 1.2}}
\theoremstyle{remark}
\newcommand{\Mbar}{\overline{\mathcal{M}}}
\newcommand{\M}{\mathcal{M}}
\newcommand{\Z}{\mathbb{Z}}
\newcommand{\QQ}{\mathbb{Q}}
\renewcommand{\P}{\mathbb{P}}
\newcommand{\bbS}{\mathbb{S}}
\newcommand{\Aut}{\operatorname{Aut}}
\newcommand{\val}{\operatorname{val}}
\newcommand{\ch}{\operatorname{ch}}
\newcommand{\bT}{\mathbf{T}}
\def\C{\mathbb{C}}
\def\F{\mathbb{F}}
\def\P{\mathbb{P}}
\def\T{\bT}
\def\Z{\mathbb{Z}}
\def\calB{\mathcal{B}}
\def\calC{\mathcal{C}}
\def\calM{\mathcal{M}}
\def\calR{\mathcal{R}}
\def\calW{\mathcal{W}}
\def\calX{\mathcal{X}}
\def\calY{\mathcal{Y}}
\def\calZ{\mathcal{Z}}
\def\M{\mathcal{M}}
\newcommand{\Gr}{\mathrm{Gr}}
\newcommand{\Ind}{\operatorname{Ind}}
\newcommand{\Spec}{\operatorname{Spec}}
\newcommand{\Conf}{\mathrm{Conf}}
\newcommand\cycle[2][\,]{%
  \readlist\thecycle{#2}%
  (\foreachitem\i\in\thecycle{\ifnum\icnt=1\else#1\fi\i})%
}
\newcommand{\CH}{\operatorname{CH}}
\let\c@equation\c@thm
\numberwithin{equation}{section}
\title{The $\bbS_n$-equivariant Chow polynomial of the Braid matroid}
\author[S. Kannan]{Siddarth Kannan}
\address{Department of Mathematics, Massachusetts Institute of Technology, Cambridge, MA, USA}
\email{\url{spkannan@mit.edu}}
\author[L. K\"uhne]{Lukas K\"uhne}
\address{Fakult\"at f\"ur Mathematik, Universit\"at Bielefeld, Bielefeld, Germany}
\email{\url{lkuehne@math.uni-bielefeld.de}}
\begin{document}
\maketitle\thispagestyle{empty}

\begin{abstract}
We determine the generating function for the $\bbS_n$-equivariant Chow polynomials of the braid matroid $B_n$. The Chow polynomial of $B_n$ is the Poincar\'e polynomial of the wonderful compactification of the complement of the braid arrangement, with respect to the maximal building set. A key input to our result is the identification of this wonderful compactification with a moduli space of multiscale differentials, recently established by Devkota, Robotis, and Zahariuc. In this way, our work also contributes to the literature on topology of moduli spaces of multiscale differentials. To prove our main formula, we study the classes of these moduli spaces in the Grothendieck ring of varieties via the formalism of $\bbS$-spaces developed by Getzler and Pandharipande. We also give a new interpretation of the numerical Chow polynomial of $B_n$ as the Poincar\'e polynomial of a moduli space of genus-zero relative stable maps to $\P^1$.
\end{abstract}

\section{Introduction}
For $n \geq 2$, the braid matroid $B_n$ is determined by the normal vectors of the braid hyperplane arrangement
\begin{equation}\label{eqn:braidarrangement}
	\{z_i = z_j \mid 1 \leq i< j \leq n\} \subseteq \C^n/\C\cdot (1, \ldots, 1).
\end{equation}
 It is also the graphic matroid of the complete graph $K_n$. The rational Chow ring\footnote{All Chow rings and Chow groups in this paper are taken with $\QQ$-coefficients.} $\CH^\star(B_n)$ with respect to the maximal building set is of interest in matroid and lattice theory. It coincides with the rational Chow ring of the wonderful compactification $Y_n$ of the complement of the braid arrangement, in the sense of De Concini--Procesi \cite{concini}.

 The symmetric group $\bbS_n$ acts on $B_n$, and this makes $\CH^\star(B_n)$ into a graded $\bbS_n$-representation. In this paper we are interested in the $\bbS_n$-equivariant Chow polynomial $\mathrm{H}_n(t)$ of $B_n$, which is defined as the $\bbS_n$-equivariant Hilbert--Poincar\'e series of the Chow ring:
\[
	\mathrm{H}_n(t) := \sum_{k \geq 0} \ch_n( \CH^{k}(B_n)) \cdot t^{k} \in \Lambda[t].
\]
Above, $\Lambda := \QQ[\![p_1, p_2, \ldots]\!]$ is the degree-completed ring of symmetric functions over $\QQ$ (see \S \ref{subsec:sym} for a precise definition), and for an $\bbS_n$-representation $V$ we write $\ch_n(V) \in \Lambda$ for its Frobenius characteristic. The element $\ch_n(V)$ determines the decomposition of $V$ into irreducible $\bbS_n$-representations.

Assemble the Chow polynomials into the generating function
\[ \mathsf{B} := \sum_{n \geq 2} \mathrm{H}_n(t) \in \Lambda [\![ t]\!]. \]
Let
\[ \M_{0, n+1} := \frac{\Conf_{n + 1}(\P^1)}{\Aut(\P^1)} \]
denote the moduli space of $n+1$ ordered distinct points on $\P^1$, and set
\[ M_n(t) : = \sum_{i \geq 0} (-1)^i \ch_n(H^i_c(\M_{0, n+1};\QQ)) t^{i - n + 2}. \]
Up to signs and a degree shift, $M_n(t)$ is the $\bbS_n$-equivariant compactly-supported Poincar\'e polynomial of $\M_{0, n+1}$; Getzler \cite[Theorem 5.7]{GetzlerGenusZero} derived an explicit closed formula for the generating function
\[ \mathsf{M} := \sum_{n \geq 2} M_n(t) \in \Lambda [\![ t ]\!]. \]
Our main theorem determines $\mathsf{B}$ in terms of $\mathsf{M}$. For $n \geq 1$, let $h_n \in \Lambda$ denote the $n$th homogeneous symmetric function; these are characterized by the formula
\[ \sum_{n\geq0}h_nt^n = \exp\left( \sum_{n \geq 1} \frac{p_n}{n}t^n\right), \]
so e.g. $h_1 = p_1$ and $h_2 = (p_1^2 + p_2)/2$.
\begin{customthm}{A}\label{thm:chow_gen_fun}
Let $\circ$ denote plethysm of symmetric functions. Then the generating function $\mathsf{B}$ is uniquely determined by the formula
\[  (h_1 + \mathsf{B}) \circ (h_1 + (t - 1)\mathsf{M}) =h_1 + t \cdot \mathsf{B}, \]
together with the initial condition $\mathrm{H}_2(t) = h_2$. 
\end{customthm}

See Table \ref{table:frob_chars} for the values of $\mathrm{H}_n(t)$ for $n \leq 6$.  Our proof of Theorem \ref{thm:chow_gen_fun} relies on a modular interpretation of the wonderful compactification $Y_n$ as a moduli space $\calB_n$ of \textit{multiscale differentials} in the sense of \cite{multiscale}. The identification we use has recently been established by Devkota, Robotis, and Zahariuc \cite{DRZ}; see \S\ref{subsec:ag_context} for a more in-depth discussion of the geometric context.

We can extract numerical results from Theorem \ref{thm:chow_gen_fun} as follows: set $\mathrm{H}^{\mathrm{num}}_1(t) =1$, and for $n \geq 2$ set
\[
	\mathrm{H}^{\mathrm{num}}_n(t) := \sum_{k \geq 0} \dim_{\QQ} \CH^{k}(B_n) \cdot t^k.
\]
This is the usual Chow polynomial of the braid matroid, which has attracted significant attention in matroid theory; see \S\ref{sec:matroids} below.

\begin{customcor}{B}\label{cor:numerics}
Let $s(n, k)$ denote the signed Stirling number of the first kind, and let $S(n, k)$ denote the Stirling number of the second kind. We have the following recursions: 
\begin{enumerate}
    \item
    For $n\ge 2$, we have
    \[ (t - 1) \cdot \mathrm{H}_{n}^{\mathrm{num}}(t) = \sum_{k = 1}^{n - 1} \mathrm{H}_{k}^{\mathrm{num}}(t) \cdot \sum_{j = k}^{n} s(n, j) S(j, k) t^{j - k}. \]
    \item Set $\chi_1 = 1$ and for $n\geq 2$ set $\chi_n := \dim_{\QQ} \CH^\star(B_n)$. Then
    \[ \chi_n= \sum_{k = 1}^{n -1} \chi_k \cdot \binom{n}{k - 1} \cdot (n - k - 1)! \cdot (-1)^{n - k - 1}. \]
\end{enumerate}
\end{customcor}
While Theorem \ref{thm:chow_gen_fun} is new, the statement of Corollary \ref{cor:numerics} can also be derived from known general recursions for Chow polynomials; see Remark \ref{rem:general_recursion}. An alternative approach to the calculation of $\mathrm{H}^{\mathrm{num}}_n(t)$ was given by Gaiffi--Serventi\footnote{Their formula is stated in terms of the Hilbert--Poincar\'e series of the rational cohomology ring. For wonderful compactifications of hyperplane arrangement complements, this series equals the Chow polynomial, under their grading convention.} (\cite[Theorem 3.1]{GS}); it is not clear to us how to extract the formulas of Corollary \ref{cor:numerics} directly from their work. 

\begin{table}[h]
\begin{tabular}{|l|l|}
\hline
$n$ & $\mathrm{H}_n(t)$                                                                                    \\ \hline
$2$ & $s_2$                                                                                       \\ \hline
$3$ & $s_3(1 + t)$                                                                              \\ \hline
$4$ & $s_4(1 + 3t + t^2) + s_{31}t+ s_{22}t $                                              \\ \hline
$5$ & $s_5(1 + 5t + 5t^2 + t^3) + s_{41}(4t + 4t^2) + s_{32}(3t + 3t^2) + s_{221}(t + t^2)$ \\ \hline
$6$ & \makecell{$s_6(1+9t+19t^2+9t^3+t^4) + s_{51}(7t+21t^2+7t^3)+ s_{42}(9t+28t^2+9t^3) +$ \\$s_{411}(t+7t^2+t^3)+ s_{33}(2t+8t^2+2t^3)+ s_{321}(2t+12t^2+2t^3)+ $ \\ $s_{3111} t^2  + s_{222}(2t+7t^2+2t^3) +s_{2211} t^2$}                                          \\ \hline
\end{tabular}
\caption{The $\bbS_n$-equivariant Chow polynomial of $B_n$ for $n \leq 6$. For a partition $\lambda \vdash n$, we write $s_\lambda \in \Lambda$ for the corresponding Schur function.}
\label{table:frob_chars}
\end{table}

In addition to the above formulas, we give a new interpretation of the Chow polynomial as the Hilbert--Poincar\'e series of the Chow ring of a certain moduli space of relative stable maps to $\P^1$.
\begin{customthm}{C}[Theorem \ref{thm:relative_stable_maps}]\label{thm:rel_maps_intro}
    If $\calR_n$ denotes the moduli space of genus-zero relative stable maps to~$\P^1$ as in Definition \ref{defn:Rn}, then for all $k \geq 0$ there is an equality of dimensions
    \[\dim_{\QQ} \CH^{k}(\calR_n) = \dim_{\QQ}\CH^k(B_n).\]
\end{customthm}
The Chow ring of $\calR_n$ encodes the relative Gromov--Witten theory of $\P^1$ with maximal contact in the sense of \cite{NR}. Theorem \ref{thm:rel_maps_intro} thus relates the combinatorics of the braid matroid with the enumerative geometry of $\P^1$. In particular, the formulas in Corollary \ref{cor:numerics} apply to the moduli space~$\calR_n$, whose topology was previously studied by the first author in \cite{KannanP1}. 

\subsection{Method} 
We prove Theorem~\ref{thm:chow_gen_fun} by analyzing the classes of the moduli spaces $\calB_n$ of multiscale differentials in the Grothendieck ring of varieties. To consider all relevant moduli spaces at once, we use the formalism of $\bbS$-spaces developed by Getzler and Pandharipande \cite{GetzlerPandharipande}. In particular, the main technical Theorem \ref{thm:key_identity} upgrades Theorem \ref{thm:chow_gen_fun} in that it completely determines the class of $\calB_n$ in the Grothendieck group of $\bbS_n$-varieties over $\C$. The proof of this theorem uses the enumerative combinatorics of genus-zero \textit{level graphs}, which encode strata in the moduli spaces of multiscale differentials. Since $\calB_n$ is smooth, proper, and has the so-called \textit{Chow--K\"unneth generation property} as in \cite{CL}, the formula for the ranks of the Chow groups follows from our formula in the Grothendieck group.

To prove Theorem \ref{thm:rel_maps_intro}, we prove that the classes of the moduli space $\calR_n$ and $\calB_n$ coincide in the Grothendieck ring of varieties, using the stratification of $\calR_n$ studied in \cite{KannanP1}. The conclusion follows because $\calR_n$ also has the Chow--K\"unneth generation property. The equality of Chow polynomials is interesting because the spaces $\calB_n$ and $\calR_n$ are not isomorphic: the former is a smooth projective variety, while the latter is only smooth as a Deligne--Mumford stack.

\subsection{Algebro-geometric context}\label{subsec:ag_context}
When the minimal building set is used for the braid matroid, the corresponding wonderful compactification coincides with the Deligne--Mumford--Knudsen moduli space $\Mbar_{0, n+1}$ of stable genus-zero curves. The Chow rings of these spaces were first computed by Keel \cite{Keel}, who also gave a recursion determining their Betti numbers. A different approach to the Betti numbers using generating functions and tree sums was given by Manin \cite{ManinTrees}, and later made $\bbS_n$-equivariant by Getzler via the language of cyclic operads \cite{GetzlerGenusZero}. 

Getzler's calculations can also be phrased in the language of $\bbS$-spaces, developed later by Getzler and Pandharipande en route to their calculation of the Betti numbers of the Kontsevich moduli space $\Mbar_{0, n}(\P^r, d)$ of genus-zero stable maps to projective space \cite{GetzlerPandharipande}. Their result also relies on enumerative combinatorics: they express the relevant generating function as an infinite sum over trees, and use a composition operation on $\bbS$-spaces to solve the tree sum recursively. 

As in the case of genus-zero stable curves and maps, our work relies on the recursive behavior of an infinite sum over trees, with the added twist of level structures which reflect the geometry of multiscale differentials. The geometry and topology of mulitscale differentials and their moduli has attracted significant interest in recent years; see e.g. \cite{multiscale, ChernMultiscale, MultiscaleConnected, TaleOfTwoModuliSpaces, ChenLarson, TostesonAbelian, ChenTaut, nontrivalnontaut} and the references theirein. In particular, Devkota calculated the Chow ring of $\calB_n$ \cite{Devkota}, and his presentation also follows from the identification of the Chow ring of $\calB_n$ with that of the braid matroid $B_n$ \cite{DRZ}. In this way, our work is a new contribution both to the topology of moduli of multiscale differentials and to the combinatorics of matroids.

In both our work and in the minimal building set case of $\Mbar_{0, n+ 1}$, the tree sum approach yields a functional equation which determines the generating function for Poincar\'e polynomials, in that one can algorithmically compute the coefficients recursively. Aluffi, Marcolli, and Nascimento \cite{AluMarNas} used the relevant functional equation to prove remarkably explicit closed formulas for the Grothendieck ring class $[\Mbar_{0, n+1}]$ which eliminate the need for the recursive calculation. We would be interested to know whether similar techniques can produce closed formulas for the Grothendieck ring class of $\calB_n$, starting from the recursion developed herein, and to what extent their techniques can be made $\bbS_n$-equivariant.

\subsection{Matroid-theoretic context}\label{sec:matroids}
Feichtner and Yuzvinsky described the Chow ring of the De Concini--Procesi compactification of a hyperplane arrangement complement in terms of the underlying matroid~\cite{FeichtnerYuzvinsky}, which naturally led to the notion of the Chow ring $\CH^\star(M)$ of an arbitrary matroid $M$. This ring depends on the choice of building set, and we employ the usual convention that $\CH^\star(M)$ is defined with respect to the maximal building set. For a matroid $M$, we use the notation $\mathrm{H}^{\mathrm{num}}_M(t)$ for the Chow polynomial, which is the Hilbert--Poincar\'e series of $\CH^\star(M)$. The study of the Chow polynomial has recently attracted significant interest. It is for instance conjectured that $\mathrm{H}_M^{\mathrm{num}}(t)$ has only real roots~\cite{FS24}; this was recently confirmed for all uniform matroids~\cite{BV25}, and for a larger class of matroids which includes the braid matroid \cite{UMEL}.

Adiprasito, Huh and Katz established a Hodge theory for matroids by proving that $\CH^\star(M)$ satisfies the Kähler package \cite{AHK}. Their work implies that $\mathrm{H}_M^{\mathrm{num}}(t)$ is a monic and palindromic polynomial with unimodal coefficients. Ferroni, Matherne, Stevens and Vecchi proved that the Chow polynomial is the inverse of the reduced characteristic polynomial in the incidence algebra of the lattice of flats~\cite{FMSV24}.
This leads to a general recursion to compute the Chow polynomial of a matroid, previously obtained by Jensen--Kutler--Usatine \cite{JKUmotivic}. Their work also allowed for an even more general definition of Chow polynomials for posets~\cite{FMV24}.
Recently, Stump produced a different formula for the Chow polynomial of a matroid using the descents along the chains in the lattice~\cite{Stu24}.
Stump's formula implies that the Chow polynomial is $\gamma$-positive. The $\gamma$-positivity of the Chow polynomial is also proved in \cite[Theorem 3.25]{FMSV24}.

In ~\cite{EFMPV25}, Eur et al. give a recursive formula for the Hilbert--Poincar\'e series of the Chow ring of a matroid is given for an arbitrary building set. In the case of the minimal building set for $B_n$, they use their recursion to re-prove the formula, discussed in \S\ref{subsec:ag_context} above, for the Poincar\'e polynomial of the moduli space $\Mbar_{0, n + 1}$ given by Aluffi, Marcolli, and Nascimento~\cite{AMM24}.

When it comes to the $\bbS_n$-action, Angarone--Nathanson--Reiner \cite{ANRperm} prove that for any matroid $M$, each Chow group $\mathrm{CH}^k(M)$ is a permutation representation of $\Aut(M)$. Their result implies that the $\bbS_n$-representations determined by Theorem \ref{thm:chow_gen_fun} are all in fact permutation representations of $\bbS_n$. Stembridge \cite{Stembridge} computed the character of $\bbS_n$ acting on the Chow ring of the Boolean matroid, and Liao gave an explicit permutation basis \cite{Liao2}. In this case the action of $\bbS_n$ is a restriction of an action of $\bbS_2 \times \bbS_n$, whose character was determined by Bergstr\"om--Minabe \cite{BergstromMinabe2}. Liao \cite{Liao} has also determined the action of $\bbS_n$ on (augmented) Chow rings of uniform matroids.
In this context, Theorem~\ref{thm:chow_gen_fun} presents progress on Question 5.1 in~\cite{ANRperm} which asks about explicit expressions for other families of matroids with symmetry.

\subsection{Outline} In \S\ref{section:prelim} we recall the moduli space $\calB_n$ of projectivized multiscale differentials as studied in \cite{DRZ}, with a particular focus on the stratification of the space by so-called level trees. In \S\ref{sec:sym} we recall $\bbS$-spaces and their basic properties established by Getzler--Pandharipande. We then reduce Theorem \ref{thm:chow_gen_fun} to a combinatorial identity in the Grothendieck ring of $\bbS$-spaces. We prove this identity in \S\ref{sec:motivic_proof}, using the recursive structure of the level tree stratification. We derive Corollary \ref{cor:numerics} in \S\ref{sec:numerical} using basic properties of symmetric functions. Finally, we prove Theorem \ref{thm:rel_maps_intro} in \S\ref{sec:relmaps}.

\subsection*{Acknowledgments}
We would like to thank Ana Mar\'ia Botero, Sarah Brauner, Megan Chang-Lee, Matthew Litman, Navid Nabijou, Dhruv Ranganathan, Vic Reiner, and Terry Dekun Song for helpful discussions and correspondence. We are especially grateful to Robert Angarone for sharing sample calculations of $\mathrm{H}_n(t)$. SK is supported by an NSF Postdoctoral Fellowship (DMS-2401850). LK is supported by the Deutsche Forschungsgemeinschaft (DFG, German Research Foundation) -- SFB-TRR 358/1 2023 -- 491392403 and SPP 2458 -- 539866293.

\section{The moduli space of multiscale differentials}\label{section:prelim}
In this section we recall the definition of the moduli space
\begin{equation}\label{eqn:introducing_Bn}
    \calB_n = \P\Xi\Mbar_{0, n+1}(\underbrace{ 0,\ldots, 0}_{n}, -2)
\end{equation}
studied in \cite{Devkota,DRZ}. The moduli space $\calB_n$ parameterizes stable $(n+1)$-marked curves of genus zero, together with certain choices of meromorphic differentials on the components of the curve. The choices of differentials are constrained by the dual tree of the stable curve. As such, we begin by setting some notation for working with dual trees.
\subsection{Stable trees}
A \emph{tree} is a finite connected graph with no cycles. Given a tree $T$, we write $V(T)$ and $E(T)$ for its sets of vertices and edges, respectively. Given $v \in V(T)$, write $\val(v)$ for the \textit{valence} of $v$, which is equal to the number of edges containing $v$.
\begin{defn}
Let $X$ be a finite set. An \emph{$X$-marked stable tree} is a pair $\mathbf{T}= (T, m)$ where $T$ is a tree, and $m: X  \to V(T)$ is a function such that
\[ \val(v) + |m^{-1}(v)| \geq 3 \]
for all $v \in V(T)$. 
\end{defn}
We use the notation $V(\bT) := V(T)$ and $m_{\bT} := m$. For any vertex $v \in V(\bT)$ we write \[\mathrm{deg}(v):= \mathrm{val}(v) + |m_{\bT}^{-1}(v)|.\]
Write $\Gamma_{0, X}$ for the set of isomorphism classes of $X$-marked stable trees. 
\begin{defn}
When $X = \{0, 1, \ldots, n\}$, we set
\[\Gamma_{0, n}^{\star}:= \Gamma_{0, 
X},\] and refer to an element $\bT \in \Gamma_{0, n}^\star$ as a \textit{rooted $n$-pointed stable tree}, where the vertex \[v_0(\bT) := m_{\bT}(0) \in V(\bT)\]
is defined to be the \textit{root} of $\T$. 
\end{defn}
Note that each tree $\bT \in \Gamma_{0, n}^\star$ is naturally directed, away from the root. 
\begin{defn}
Given $\bT \in \Gamma_{0, n}^\star$, define the \textit{canonical partial order} $\leq_\bT$ on the vertex set $V(\bT)$, by declaring $v_1 \leq v_2$ if and only if $v_1 = v_2$ or there is a nonempty directed path from $v_1$ to $v_2$ in $\bT$. We write $v_1 <_\bT v_2$ if and only if there is a nonempty directed path from $v_1$ to $v_2$ in $\bT$.
\end{defn}
\subsection{The moduli space $\calB_n$}
Let $\Mbar_{0, n+1}$ denote the moduli space of $(n+1)$-pointed stable curves of genus zero, where the marked points are labelled by the set $\{0, \ldots, n\}$. Each genus-zero, $(n+1)$-pointed stable curve $(C, q_0, \ldots, q_n)$ naturally has a dual tree $\bT_{C} =(T_C, m_C) \in \Gamma_{0, n}^\star$; the vertex set $V(T_C)$ is the set of irreducible components of $C$, the edge set $E(T_C)$ is the set of nodes of $C$, and the marking function $m_C$ tracks the distribution of marked points among the irreducible components.
\begin{defn}
Let $(C, q_0, \ldots, q_n)$ be an $(n+1)$-pointed stable curve of genus zero, and let $C_v$ be an irreducible component of $C$, corresponding to a vertex $v \in V(\bT_C)$. Let $\eta$ be a node of $C$ which is contained in $C_v$. We say the node $\eta$ is \textit{incoming} at $C_v$ if the edge $e \in E(\bT_C)$ representing $\eta$ is directed towards $v$. If $v$ is a non-root vertex of $\bT_C$, then there is a unique node which is incoming at $C_v$, since $\bT_C$ is a connected tree.
\end{defn}

We can now describe the space $\calB_n$ from (\ref{eqn:introducing_Bn}). We will only describe the closed points of the moduli space, since foundational aspects are treated in \cite{multiscale}. The space $\calB_n$ is a fine moduli space for tuples \begin{equation}\label{eqn:diff_tuple}(C, q_0, \ldots, q_n; \ell, \{\omega_v\}_{v \in V(T_C)})
\end{equation}
with the following properties.
\begin{itemize}
\item $(C, q_0, \ldots, q_n)$ is an $(n+1)$-pointed stable curve of genus $0$ with marked dual tree $\bT_C \in \Gamma_{0, n}^\star$.
\item $\ell: V(\bT_C) \to \{0, \ldots, k\}$ is a surjective map for some $k \geq 0$, such that whenever $v <_T w$, we have $\ell(v) < \ell(w)$.
\item If $v \in V(\bT_C)$ is the root vertex of $\bT_C$ and $C_v$ denotes the corresponding irreducible component of $C$, then $\omega_v \in H^0(C, \Omega_{C_v}(2q_0))$ is a non-zero meromorphic differential on $C_v$ with a unique pole of order $2$ at~$q_0$.
\item If $v \in V(\bT_C)$ is a non-root vertex and $C_v \subset C$ is the corresponding irreducible component with incoming node $q_v \in C_v$, then $\omega_v \in H^0(C_v, \Omega_{C_v}(2q_v))$ is a meromorphic differential on $C_v$ with a unique pole of order $2$ at $q_v$.
\end{itemize} 

The function $\ell$ is called a \textit{level structure} on the dual graph $\bT_C$, and we call the union of components $C_v \subset C$ with $\ell(v) = j$ the \textit{$j$th level} of $C$. Each level $j$ of $C$ admits a diagonal $\C^\star$-action, which corresponds to simultaneously rescaling the differentials $\{\omega_{v} \mid \ell(v) = j \}$. If the number of levels is $e$, this leads to a $(\C^\star)^{e}$-action on such a tuple. Two tuples as in (\ref{eqn:diff_tuple}) are considered equivalent if they differ by the $(\C^\star)^e$-action. This torus is usually called the \textit{level rotation torus} in the literature on multiscale differentials.

\begin{defn}
    We refer to the tuples corresponding to closed points of $\calB_n$ as in (\ref{eqn:diff_tuple}) as \textit{multiscale differentials}. For such a tuple, the marked point $q_0$ is called the \textit{distinguished marking}, and the marked points $q_1, \ldots, q_n$ are the \textit{non-distinguished markings}. 
\end{defn}

In \cite{Devkota}, Devkota shows that the space $\calB_n$ is a smooth projective variety of dimension $n - 3$. In \cite[Theorem 3.1]{DRZ}, Devkota, Robotis, and Zahariuc identify the moduli space $\calB_n$ with the De Concini--Procesi compactification $Y_n$ of the complement of the braid arrangement (\ref{eqn:braidarrangement}) with respect to the maximal building set, by identifying both spaces with the same iterated blowup of $\Mbar_{0, n+ 1}$. Their identification immediately implies the following theorem.
\begin{thm}\label{thm:equiv_iso}
There is a $\bbS_n$-equivariant isomorphism of rational Chow rings
\[ \CH^\star(\calB_n) \cong \CH^\star(B_n). \]
\end{thm}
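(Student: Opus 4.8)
The plan is to deduce Theorem~\ref{thm:equiv_iso} directly from the geometric identification of \cite[Theorem 3.1]{DRZ}, tracking the $\bbS_n$-action through each step. First I would recall the precise content of the Devkota--Robotis--Zahariuc theorem: both $\calB_n$ and the wonderful compactification $Y_n$ are realized as the \emph{same} iterated blowup of $\Mbar_{0,n+1}$, where the blowup is along the (strict transforms of the) boundary strata indexed by the maximal building set of $B_n$. Since $Y_n$ is the De Concini--Procesi wonderful model for the braid arrangement with the maximal building set, the Feichtner--Yuzvinsky presentation \cite{FeichtnerYuzvinsky} gives a canonical ring isomorphism $\CH^\star(Y_n) \cong \CH^\star(B_n)$, where the right-hand side is the combinatorially-defined Chow ring of the matroid. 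So it suffices to promote the isomorphism $\calB_n \cong Y_n$ to an $\bbS_n$-equivariant one and then check that the induced map on $\CH^\star(Y_n)$ matches the combinatorial $\bbS_n$-action on $\CH^\star(B_n)$.

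For the equivariance of $\calB_n \cong Y_n$, the key point is that the $\bbS_n$-action on both sides is induced from the $\bbS_n$-action on $\Mbar_{0,n+1}$ permuting the $n$ non-distinguished markings $q_1, \ldots, q_n$ (fixing the distinguished marking $q_0$), and the center of the iterated blowup is $\bbS_n$-invariant. Indeed, on the $\calB_n$ side, permuting the labels $\{1, \ldots, n\}$ in the tuple (\ref{eqn:diff_tuple}) carries multiscale differentials to multiscale differentials and commutes with the level-rotation torus action, so $\bbS_n$ acts on $\calB_n$; on the $Y_n$ side, $\bbS_n$ permutes the coordinates $z_1, \ldots, z_n$ of the braid arrangement (\ref{eqn:braidarrangement}) and hence acts on the maximal building set by permuting flats, so it acts on the wonderful model. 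Since the blowup construction is functorial and the building set is permuted to itself, the comparison isomorphism of \cite{DRZ} — being constructed canonically from the blowup data on each side — is automatically $\bbS_n$-equivariant. I would make this precise by noting that both identifications of $\calB_n$ and $Y_n$ with the iterated blowup $\mathrm{Bl}(\Mbar_{0,n+1})$ are $\bbS_n$-equivariant, and composing.

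Finally, I would argue that pulling back along the $\bbS_n$-equivariant isomorphism $\calB_n \xrightarrow{\sim} Y_n$ induces an $\bbS_n$-equivariant ring isomorphism on rational Chow rings (Chow rings of smooth proper varieties are contravariant functors, and equivariance is immediate from naturality), and that the Feichtner--Yuzvinsky isomorphism $\CH^\star(Y_n) \cong \CH^\star(B_n)$ is itself $\bbS_n$-equivariant because it is defined purely in terms of the combinatorics of the lattice of flats of $B_n$, on which $\bbS_n$ acts compatibly with its action on the boundary divisors of $Y_n$. Composing the two gives the desired $\bbS_n$-equivariant isomorphism $\CH^\star(\calB_n) \cong \CH^\star(B_n)$.

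The main obstacle I anticipate is purely expository rather than mathematical: one must be careful that the $\bbS_n$-action referenced in \cite{DRZ} (permuting the non-distinguished markings) is literally the same as the $\bbS_n$-action on $B_n$ coming from permuting the hyperplanes $\{z_i = z_j\}$, i.e.\ that the dictionary between the $n+1$ marked points and the $n$ coordinates of the braid arrangement is the standard one in which point $q_0$ plays a distinguished role and $q_1, \ldots, q_n$ correspond to $z_1, \ldots, z_n$. Once this labelling convention is pinned down — and it is exactly the convention built into the identification $Y_n \cong \mathrm{Bl}(\Mbar_{0,n+1})$ used on both sides — the equivariance is forced, and the theorem follows with no further computation.
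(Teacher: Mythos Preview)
Your proposal is correct and follows exactly the paper's approach: the paper simply states that the theorem is an immediate consequence of \cite[Theorem~3.1]{DRZ}, which identifies both $\calB_n$ and $Y_n$ with the same iterated blowup of $\Mbar_{0,n+1}$, and you have filled in the details of why this identification and the Feichtner--Yuzvinsky presentation are $\bbS_n$-equivariant. Your more explicit treatment of the equivariance (tracking the $\bbS_n$-action on the non-distinguished markings through the blowup and the building set) is a faithful elaboration of what the paper compresses into the phrase ``immediately implies.''
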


According to \cite{DRZ}, Theorem \ref{thm:equiv_iso} can also be deduced from the description of the log Chow ring of $\Mbar_{0, n+1}$ in \cite[Theorem 11]{PRSS}. Theorem \ref{thm:equiv_iso} lets us use the geometry of $\calB_n$ to calculate the $S_n$-equivariant Chow polynomial of $B_n$.

\subsection{Multiscale differentials on smooth curves} As established in \cite{multiscale}, the moduli space $\calB_n$ is a simple normal crossings compactification of the subspace
\[\calB^\circ_n \subset \calB_n\] where the underlying curve of the multiscale differential is smooth. Towards better understanding the geometry of $\calB_n$, let us briefly describe $\calB^\circ_n$. First, denote by
\[ \pi: \calC_{0, n+1} \to \calM_{0, n + 1} \]
the universal curve. Let $\omega_{\calC_{0, n+1}/\M_{0, n + 1}}$ denote the relative dualizing sheaf of this morphism. There are $n + 1$ sections \[\mathrm{s}_0, \ldots, \mathrm{s}_n: \M_{0, n+1} \to \calC_{0, n + 1} \]
of $\pi$ determined by the marked points, and we can thus define the twist $\omega_{\calC_{0, n+1}/\M_{0, n + 1}}(2\mathrm{s}_0)$. The pushforward
\[ \pi_*(\omega_{\calC_{0, n+1}/\M_{0, n + 1}}(2\mathrm{s}_0)) \]
is a line bundle on $\M_{0, n + 1}$, with fiber over $(C, q_0, \ldots, q_n)$ given by $H^0(C, \Omega_C(2q_0))$: the higher cohomologies of the sheaves $\Omega_C(2q_0)$ vanish, and $\dim_{\C} H^0(C, \Omega_C(2q_0)) = 1$ for all $C$. 

\begin{defn}\label{defn:Omega_star}
We write $\Omega \M_{0, n + 1}$ for the total space of the line bundle $\pi_*(\omega_{\calC_{0, n+1}/\M_{0, n + 1}}(2\mathrm{s}_0))$ over $\M_{0, n + 1}$. We also write $\Omega^\star\M_{0, n+1}$ for the complement of the zero section in $\Omega \M_{0, n + 1}$. 
\end{defn}
The variety $\Omega^\star\M_{0, n+1}$ parameterizes irreducible smooth pointed genus-zero curves $(C, q_0, \ldots, q_n)$ together with a nonzero section of $H^0(C, \Omega_C(2q_0))$. 

\begin{lem}\label{lem:bundlesoversmooth}
    There are $\bbS_n$-equivariant isomorphisms
    \[ \Omega^\star \M_{0, n + 1} \cong \C^\star \times \M_{0, n+ 1} \]
    and
    \[ \calB^\circ_n \cong \M_{0, n + 1}. \]
\end{lem}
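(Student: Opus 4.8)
The plan is to reduce both isomorphisms to properties of the line bundle $L:=\pi_*(\omega_{\calC_{0,n+1}/\M_{0,n+1}}(2\mathrm{s}_0))$ recalled above, whose fiber over $(C,q_0,\dots,q_n)$ is the one-dimensional space $H^0(C,\Omega_C(2q_0))$. This fiber depends only on the pair $(C,q_0)$, which the $\bbS_n$-action leaves fixed, so the permutation action on $\M_{0,n+1}$ lifts canonically to $L$; thus $L$, its total space $\Omega\M_{0,n+1}$, and the complement of the zero section $\Omega^\star\M_{0,n+1}=L\setminus 0$ are all $\bbS_n$-equivariant, and the constructions below will be natural in the family.

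I would treat $\calB_n^\circ\cong\M_{0,n+1}$ first. If the underlying curve of a multiscale differential is smooth, its dual tree has a single vertex: the level structure is forced to be constant, the level rotation torus is a single copy of $\C^\star$, and the only remaining datum is a nonzero $\omega\in H^0(C,\Omega_C(2q_0))$, taken up to the fiberwise $\C^\star$-scaling. Hence $\calB_n^\circ$ is the quotient of $\Omega^\star\M_{0,n+1}$ by this scaling, i.e. $\calB_n^\circ=\P(L)$. Since $L$ is a line bundle, the projection $\P(L)\to\M_{0,n+1}$ is an isomorphism, and it is $\bbS_n$-equivariant because projectivization is insensitive to twisting $L$ by a character, so no choice enters. (Equivalently, $(C,q_\bullet,\omega)\mapsto(C,q_\bullet)$ is a natural bijection between smooth varieties.)

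For $\Omega^\star\M_{0,n+1}\cong\C^\star\times\M_{0,n+1}$ it suffices to trivialize $L$, as the complement of the zero section of the trivial line bundle on a variety $X$ is $\C^\star\times X$. One route invokes that $\M_{0,n+1}$, realized (after putting $q_0$ at $\infty$ and the remaining points at $a_1,\dots,a_n$ with $\sum a_i=0$) as the complement of the braid hyperplane arrangement in an affine space, has trivial Picard group. I would instead use the more explicit description, which also keeps the $\bbS_n$-action in view: a nonzero $\omega\in H^0(C,\Omega_C(2q_0))$ is nowhere vanishing with a single pole, necessarily of order $2$ and hence residueless, so it admits a single-valued primitive $z$ on $C\setminus q_0$, giving an isomorphism $C\setminus q_0\cong\A^1$ with $\omega=dz$ and $q_0=\infty$, unique up to an additive constant. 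Recording the positions $z(q_1),\dots,z(q_n)$ --- distinct points of $\A^1$, well-defined up to a common translation --- yields a $\bbS_n$-equivariant isomorphism $\Omega^\star\M_{0,n+1}\cong\Conf_n(\A^1)/\C$ (with $\C$ acting by simultaneous translation), in which the residual $\C^\star$-scaling presents $\M_{0,n+1}$ as the quotient.

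The main obstacle is the final step: trivializing the $\C^\star$-torsor $\Omega^\star\M_{0,n+1}\to\M_{0,n+1}$ while tracking the $\bbS_n$-action on the $\C^\star$-factor. The torsor is trivial because $\operatorname{Pic}(\M_{0,n+1})=0$, but not canonically so, so the product decomposition involves a choice of trivializing section, and one must check that the resulting $\bbS_n$-structure on $\C^\star\times\M_{0,n+1}$ is the intended one. Everything else is a routine unwinding of the modular descriptions, resting on the single input $\dim_\C H^0(C,\Omega_C(2q_0))=1$ recalled before the statement.
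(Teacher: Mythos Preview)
Your argument follows the paper's: both identify $\calB_n^\circ$ with $\Omega^\star\M_{0,n+1}/\C^\star$ (equivalently $\P(L)$, which is the base since $L$ has rank one), and both trivialize $L$ by invoking $\mathrm{Pic}(\M_{0,n+1})=0$. The paper's proof is just as brief on the point you flag---the $\bbS_n$-equivariance of the resulting product splitting---merely asserting that the bundle is ``$\bbS_n$-equivariant by construction'' without addressing the induced action on the $\C^\star$ factor, so your candor about this step does not put you behind the paper. Your explicit identification $\Omega^\star\M_{0,n+1}\cong\Conf_n(\A^1)/\C$ via integrating $\omega$ is a pleasant extra the paper omits, though as you correctly observe it does not by itself supply the product decomposition; one still needs a (choice of) section of the $\C^\star$-torsor.
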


\begin{proof}
    The Picard group $\mathrm{Pic}(\M_{0, n + 1})$ is trivial, and $\Omega^\star \M_{0, n + 1}$ is the complement of the zero section in the line bundle $\Omega \M_{0, n+1}$. Therefore $\Omega^\star \M_{0, n+1} \to \M_{0, n + 1}$ is a trivial $\C^\star$-bundle, which is $\bbS_n$-equivariant by construction. The second isomorphism follows from the observation that
    \[ \calB^\circ_n \cong \Omega^\star \M_{0, n + 1} /\C^\star \]
    where $\C^\star$ acts by scaling the differential.
\end{proof}

We will now describe the strata in the dual tree stratification of $\calB_n$. The geometry of each stratum is concretely understood in terms of the trivial torus bundles $\Omega^\star \M_{0, n + 1}$.
\subsection{The stratification by level trees}
Each point of the moduli space $\calB_n$ determines a dual tree, with the additional data of a level structure.
\begin{defn}
An \textit{$(n+1)$-pointed stable level tree} is a pair $(\bT, \ell)$ where $\bT \in \Gamma_{0, n}^\star$ and \[\ell: V(\bT) \to \{0, \ldots,k \}\] is a level structure for some integer $k \geq 0$. We call the integer $k + 1$ the \textit{length} of the level tree. We write $\Gamma_{0, n}^{\mathrm{lev, \star}}$ for the set of isomorphism classes of $(n+1)$-pointed stable level trees, and \[\Gamma_{0, n}^{\mathrm{lev, \star}}(k) \subset \Gamma_{0, n}^{\mathrm{lev, \star}}\] for the subset of level trees of length $k + 1$.
\end{defn}
We use the set $\Gamma_{0, n}^{\mathrm{lev, \star}}$ to define a stratification of the moduli space $\calB_n$. Given $(\bT, \ell) \in \Gamma_{0, n}^{\mathrm{lev, \star}}$, we write
\[\calB_{(\bT, \ell)} \subset \calB_{n}\]
for the locally closed stratum where the level tree underlying the stable curve is equal to $(\bT, \ell)$. The following lemma is immediate from the definition of a multiscale differential.
\begin{lem}\label{lem:stratum_factorization}
Suppose that $(\bT, \ell) \in \Gamma_{0, n}^{\mathrm{lev, \star}}(k)$. For each integer $j$ such that $j \in \{0, \ldots, k\}$, write
\[ V_j(\bT) := \ell^{-1}(j) \subseteq V(\bT).\]

Then there is an isomorphism
\[\calB_{(\bT, \ell)} \cong \prod_{j = 0}^{k} \left(\prod_{v \in V_j(\bT)} \Omega^\star \M_{0, \deg(v)} \right)/\C^\star, \]
where the $\C^\star$-action on $\prod_{v \in V_j(\bT)} \Omega^\star \M_{0, \deg(v)}$ is diagonal and free.
\end{lem}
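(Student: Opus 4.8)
The strategy is to unwind the description of $\calB_n$ as a fine moduli space of multiscale differentials. Once the level tree $(\bT,\ell)$ is fixed, the combinatorial type of the underlying curve is rigid, so the stratum $\calB_{(\bT,\ell)}$ breaks up as a product over the vertices of $\bT$ — one factor per irreducible component, recording the cross-ratio data of that $\P^1$ and its differential — and all that is left is to track the level rotation torus. To set this up, recall the normalization-by-dual-tree picture for $\Mbar_{0,n+1}$: for $v\in V(\bT)$ let $S(v)$ denote the set of \emph{special points} of the component $C_v$, namely the nodes of $C$ lying on $C_v$ together with the markings $q_i$ with $m_\bT(i)=v$, so $|S(v)|=\deg(v)$; then the locus of curves whose dual tree is exactly $\bT$ is isomorphic to $\prod_{v\in V(\bT)}\M_{0,S(v)}$, the moduli of tuples of smooth genus-zero curves with distinct marked points indexed by the $S(v)$. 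Choosing a bijection $S(v)\cong\{0,1,\ldots,\deg(v)-1\}$ for each $v$ identifies this with $\prod_{v\in V(\bT)}\M_{0,\deg(v)}$, and I would arrange the bijections so that the \emph{distinguished special point} $q^{\mathrm{in}}_v$ of $C_v$ corresponds to the index $0$, where $q^{\mathrm{in}}_v:=q_0$ if $v=v_0(\bT)$ and $q^{\mathrm{in}}_v:=q_v$, the incoming node, otherwise (well-defined for non-root $v$ since $\bT$ is a tree).

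Next I would add the differential data. Because $\calB_n$ is a fine moduli space, a closed point of $\calB_{(\bT,\ell)}$ is an equivalence class, under the level rotation torus $(\C^\star)^{k+1}$, of tuples $(C,q_0,\ldots,q_n;\ell,\{\omega_v\}_v)$ as in (\ref{eqn:diff_tuple}) whose underlying level tree is $(\bT,\ell)$. By the defining conditions, $\omega_v$ is nothing but a nonzero element of $H^0(C_v,\Omega_{C_v}(2q^{\mathrm{in}}_v))$ — this is the stated condition verbatim when $v$ is the root, and is the stated condition with $q^{\mathrm{in}}_v=q_v$ when $v$ is not — and on $C_v\cong\P^1$ such a nonzero section automatically has a pole of order exactly $2$ at $q^{\mathrm{in}}_v$ and no other pole, since $H^0(\P^1,\Omega_{\P^1}(2q))$ is one-dimensional. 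Thus, under the bijections chosen above (which relabel $q^{\mathrm{in}}_v$ as the marking $0$), the datum of $C_v$ with its $\deg(v)$ markings together with $\omega_v$ is exactly a point of $\Omega^\star\M_{0,\deg(v)}$ in the sense of Definition \ref{defn:Omega_star}. Collecting the factors over $v$, the space of such tuples, before passing to the torus quotient, is identified with the variety $\prod_{v\in V(\bT)}\Omega^\star\M_{0,\deg(v)}$.

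It then remains to take the quotient. Grouping the product by levels gives $\prod_{v\in V(\bT)}\Omega^\star\M_{0,\deg(v)}=\prod_{j=0}^{k}\bigl(\prod_{v\in V_j(\bT)}\Omega^\star\M_{0,\deg(v)}\bigr)$, and by definition the $j$th coordinate of $(\C^\star)^{k+1}$ rescales simultaneously all the $\omega_v$ with $\ell(v)=j$; that is, it acts by diagonal scaling on the $\Omega^\star$-fibers of the $j$th factor and trivially on the rest. This action respects the level-wise product, and on each factor the diagonal $\C^\star$ acts freely, since the scaling $\C^\star$-action on a single $\Omega^\star\M_{0,m}$ — the complement of the zero section in a line bundle — is free. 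Hence the geometric quotient distributes over the product over $j$, and one obtains
\[ \calB_{(\bT,\ell)} \cong \prod_{j=0}^{k}\left(\prod_{v\in V_j(\bT)}\Omega^\star\M_{0,\deg(v)}\right)/\C^\star, \]
with each $\C^\star$ acting diagonally and freely, as claimed.

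The only genuine point of care — and the reason the lemma is, as the paper says, immediate rather than substantive — is the combinatorial bookkeeping in the first step: the identification of the dual-tree locus with $\prod_v\M_{0,\deg(v)}$ must be set up so that it carries the incoming-node / distinguished-marking structure of each component onto the distinguished marking of $\Omega^\star\M$, and, for the way this stratification is used $\bbS_n$-equivariantly later, so that the conventions are compatible with the permutation action on the non-distinguished markings $q_1,\ldots,q_n$. Everything else is a direct unwinding of the definition of a multiscale differential; in particular no node-matching or residue conditions intervene, since the requirement that $\ell$ increase strictly along each edge of $\bT$ leaves no edges within a single level.
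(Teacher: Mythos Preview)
Your proposal is correct and is exactly the argument the paper has in mind: the lemma is stated there as ``immediate from the definition of a multiscale differential,'' and what you have written is precisely that unwinding---identify the fixed-dual-tree locus with $\prod_v \M_{0,\deg(v)}$, add the nonzero differential on each component to land in $\prod_v \Omega^\star\M_{0,\deg(v)}$, then quotient level by level by the rotation torus. Your closing remark that strict monotonicity of $\ell$ along edges rules out intra-level nodes (hence no residue/matching conditions) is the one substantive check, and it is right.
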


\begin{rem}
The definition of $(n + 1)$-pointed stable level trees is equivalent, \textit{mutatis mutandi}, to Stanley's definition of \textit{total partitions} of $[n]$: see \cite[Example 5.2.5]{EC2}, especially Figure 5-3. This defines a straightforward $\bbS_n$-equivariant bijection between $\Gamma_{0, n}^{\mathrm{lev}, \star}$ and the cones of the \textit{Bergman fan} of the braid matroid, see \cite[Figure 5]{ArdilaKlivans}. 
\end{rem}

\section{Symmetric functions and $\bbS$-spaces}\label{sec:sym}

In this section we set up a motivic identity (Theorem \ref{thm:key_identity}), and explain how Theorem \ref{thm:chow_gen_fun} follows from it. The proof of the identity will be given in \S\ref{sec:motivic_proof}. The identity is stated in the language of Serre characteristics of $\bbS$-spaces as developed by Getzler--Pandharipande \cite{GetzlerPandharipande}, which we now recall.
\subsection{$\bbS$-spaces and associated operations} An $\bbS$-space $\calX$ is a sequence of $\C$-varieties $\calX(n)$ for each integer $n \geq 0$, such that $\calX(n)$ has an action of $\bbS_n$. Let $K_0(\mathsf{Var}; \bbS)$ denote the Grothendieck group of $\bbS$-spaces: this is the free abelian group generated by isomorphism classes of $\bbS$-spaces, with the relation
\[ [\calX] = [\calX \smallsetminus \calY] + [\calY] \] whenever $\calY \hookrightarrow \calX$ is an $\bbS$-subvariety. There is an isomorphism
\[ K_0(\mathsf{Var};\bbS) \cong \prod_{n \geq 0} K_0(\mathsf{Var};\bbS_n)\]
where $K_0(\mathsf{Var};\bbS_n)$ is the Grothendieck group of $\bbS_n$-varieties. The group $K_0(\mathsf{Var};\bbS)$ is made into a ring using the box product:
\[ [\calX] \cdot [\calY] = [\calX \boxtimes \calY], \]
where $\calX \boxtimes \calY$ is defined by
\begin{equation}\label{eqn:boxtimes}
    (\calX \boxtimes \calY)(n) = \coprod_{k = 0}^{n} \Ind_{\bbS_k \times \bbS_{n - k}}^{\bbS_n} \calX(k) \times \calY(n - k).
\end{equation}
There is also a composition structure, denoted by $\circ$, on $K_0(\mathsf{Var};\bbS)$. The composition $\calX \circ \calY$ is defined whenever $\calY(0) = \varnothing$, and it is determined by the formula
\begin{equation}\label{eqn:comp_defn}
(\calX \circ \calY)(n) = \coprod_{k \geq 0} (\calX(k) \times \calY^{\boxtimes k}(n))/\bbS_k,
\end{equation}
where $\bbS_k$ acts diagonally. Note that the quotient by $\bbS_k$ exists as a variety since $\bbS_k$ is a finite group.

The following basic combinatorial lemma about compositions of $\bbS$-spaces is essentially immediate from the definition, but it will be a useful identity for a later proof. 

\begin{lem}\label{lem:sigma1plus}
    Suppose $\calX$ and $\calY$ are $\bbS$-spaces such that $\calY(n) = \varnothing$ for $n \leq 1$. Let
    \[\calZ(n):= \begin{cases}
        \Spec \C &\text{if }n = 1\\
        \calY(n) &\text{if }n \neq 1.
    \end{cases}  \]
    Then in $K_0(\mathsf{Var};\bbS)$, we have
    \[ [\calX \circ \calZ] = [\calW] \]
    where in $K_0(\mathsf{Var};\bbS_n)$, we have
    \[ [\calW(n)] = [\calX(n)] + \sum_{j = 0}^{n - 1}\left[\left(\calX(j) \times \coprod_{\substack{{i_1 + \cdots + i_j = n}\\{\text{ at least one }i_\ell > 1}}} \Ind_{\prod_{k = 1}^{j} \bbS_{i_k}}^{\bbS_n} \prod_{k = 1}^{j} \calZ(i_k) \right)/\bbS_j \right].\]
\end{lem}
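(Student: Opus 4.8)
The plan is to unwind both sides of the claimed identity from the definition of composition \eqref{eqn:comp_defn} and compare them level-by-level in $K_0(\mathsf{Var};\bbS_n)$. First I would note that since $\calZ(0) = \calY(0) = \varnothing$, the composition $\calX \circ \calZ$ is well-defined, and by \eqref{eqn:comp_defn} we have
\[ (\calX \circ \calZ)(n) = \coprod_{j \geq 0} \bigl(\calX(j) \times \calZ^{\boxtimes j}(n)\bigr)/\bbS_j. \]
So the entire content of the lemma is an analysis of $\calZ^{\boxtimes j}(n)$, the $j$-fold box power. Unwinding the iterated box product \eqref{eqn:boxtimes}, one gets
\[ \calZ^{\boxtimes j}(n) = \coprod_{i_1 + \cdots + i_j = n} \Ind_{\prod_{k=1}^j \bbS_{i_k}}^{\bbS_n} \prod_{k=1}^j \calZ(i_k), \]
the disjoint union running over all ordered $j$-tuples of nonnegative integers summing to $n$. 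Because $\calZ(0) = \varnothing$, only tuples with all $i_k \geq 1$ contribute; and because $\calZ(1) = \Spec\C$ with trivial $\bbS_1$-action, a coordinate with $i_k = 1$ contributes a trivial tensor factor and a trivial induction slot.

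The key step is then to separate the tuples $(i_1,\dots,i_j)$ according to how many entries equal $1$. If all $j$ entries equal $1$ then $j = n$ and the corresponding summand of $\calZ^{\boxtimes n}(n)$ is just $\Spec\C$ with trivial $\bbS_n$-action, so $\bigl(\calX(n)\times\Spec\C\bigr)/\bbS_n$ — wait, here one must be careful: the $\bbS_n$ in the quotient of \eqref{eqn:comp_defn} is the symmetric group \emph{permuting the $j = n$ slots}, acting diagonally on $\calX(n)$ and on $\calZ^{\boxtimes n}(n)$, whereas the induced $\bbS_n$ acts trivially on this particular summand; so this summand contributes exactly $[\calX(n)]$ to $[\calW(n)]$, matching the first term on the right-hand side. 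For the remaining terms, a tuple $(i_1,\dots,i_j)$ with exactly $j - m$ entries equal to $1$ (so $m$ entries $> 1$, with $1 \le m \le j$, giving $m \le j \le n$) has a stabilizer inside the $\bbS_j$ permuting slots; the quotient by $\bbS_j$ of the corresponding piece of $\calX(j)\times\calZ^{\boxtimes j}(n)$ can be reorganized, using that the $1$-slots carry no geometry, as $\bigl(\calX(j)\times\coprod_{\text{(tuples with }\geq 1\text{ entry }>1)}\Ind(\cdots)\bigr)/\bbS_j$ with the sum reindexed by $j$ running from $m$ up; reindexing the outer sum over $j$ by $j' = j$ (keeping $j$ from $0$ to $n-1$, since $j = n$ was the all-ones case already extracted) and collecting, one arrives at the stated formula for $[\calW(n)]$. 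The bookkeeping that makes this clean is that for each fixed $j$ with $0 \le j \le n-1$, the summand $\bigl(\calX(j)\times\calZ^{\boxtimes j}(n)\bigr)/\bbS_j$ of $(\calX\circ\calZ)(n)$ already only involves tuples summing to $n$ with $j < n$ parts, hence with at least one part $> 1$, so no further case split is needed there; the all-ones case is forced to be $j = n$ exactly.

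The main obstacle I anticipate is purely notational rather than mathematical: making precise the claim that quotienting $\calX(j)\times\Ind_{\prod\bbS_{i_k}}^{\bbS_n}\prod\calZ(i_k)$ by the slot-permuting $\bbS_j$ is compatible with the cut-and-paste relation in $K_0(\mathsf{Var};\bbS_n)$ and produces exactly the induced-representation summand displayed on the right-hand side, with the correct residual $\bbS_j$-action (which permutes the $\calX(j)$-factor and simultaneously permutes equal-size blocks of the induction). One must check that disjoint unions of $\bbS_j$-stable locally closed pieces pass to the quotient additively, which follows because $\bbS_j$ is finite so the quotient functor is exact on such decompositions, and that the $\Ind$ and the quotient interact as claimed — this is bookkeeping with double cosets but nothing deeper. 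Once the indexing conventions are fixed, the identity is a formal consequence of \eqref{eqn:boxtimes} and \eqref{eqn:comp_defn} together with $\calZ(0) = \varnothing$ and $\calZ(1) = \Spec\C$.
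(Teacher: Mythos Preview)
Your approach is exactly the paper's: expand \eqref{eqn:comp_defn}, note that the $j=n$ summand contributes $[\calX(n)]$, and observe that for $j<n$ any composition of $n$ into $j$ positive parts automatically has some part exceeding $1$, so the constraint in the displayed sum is vacuous there. Your one wobble is the $j=n$ case: $\calZ^{\boxtimes n}(n)$ is not a single $\Spec\C$ but $\Ind_{\bbS_1^n}^{\bbS_n}\Spec\C$, i.e.\ $n!$ points carrying a free slot-permuting $\bbS_n$-action, whence $(\calX(n)\times\calZ^{\boxtimes n}(n))/\bbS_n\cong\calX(n)$ with its native marking action; the remaining worries (separating tuples by the number of $1$'s, double cosets) are unnecessary.
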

\begin{proof}
    From the definition of composition (\ref{eqn:comp_defn}) we have the identity
    \[ [\calW(n)] = \sum_{j = 0}^n [(\calX(j) \times \calZ^{\boxtimes j}(n))/\bbS_j] \]
    in the Grothendieck group of $\bbS_n$-varieties. From the definition of $\calZ$, we find that summand corresponding to $j = n$ is exactly $[\calX(n)]$. Now the formula follows by applying the definition of the box product (\ref{eqn:boxtimes}) to each remaining term in the sum: if we have $i_1 + \cdots + i_j = n$ for positive integers $i_\ell$, and $j < n$, then it is automatic that at least one $i_\ell > 1$.
\end{proof}
\subsection{From $\bbS$-spaces to symmetric functions}\label{subsec:sym} Define the graded ring 
\[\Lambda:= \QQ[\![p_1, p_2, \ldots]\!], \]
where $p_i$ has degree $i$. The ring $\Lambda$ is the completion, with respect to degree, of the usual ring of symmetric functions over $\QQ$. Elements of $\Lambda$ are expressions of the form
\[ f = \sum_{d \geq 0} f_d \]
where each $f_d = f_d(p_1, p_2, \ldots)$ is a polynomial which is homogeneous of degree $d$. Given a partition $\mu = (1^{\mu_1}, 2^{\mu_2}, \ldots)$, we define the monomial
\[ p_{\mu} := \prod_{i \geq 1}p_i^{\mu_i} \in \Lambda. \]
For a permutation $\tau \in \bbS_n$, we write $\lambda(\tau)$ for the cycle type of $\tau$, considered as a partition. This lets us define the Frobenius characteristic of a finite-dimensional $\bbS_n$-representation $V$ by the formula
\begin{equation}\label{eqn:frobenius_char_defn}
    \ch_n(V) := \frac{1}{n
!} \sum_{\tau \in \bbS_n} \mathrm{Tr}(\tau|V) \cdot p_{\lambda(\tau)}.
\end{equation} 
With the Frobenius characteristic in hand, we can define the $\bbS_n$-equivariant virtual Poincar\'e polynomial of a variety $X$ with an $\bbS_n$-action:
\[ \mathsf{e}^{\bbS_n}(X) := \sum_{i, j \geq 0} (-1)^i \ch_n(\Gr^{W}_{j} H^i_c(X;\QQ)) \cdot t^j \in \Lambda[t], \]
where
\[\Gr^{W}_{j} H^i_c(X;\QQ) = W_jH^i_c(X;\QQ)/W_{j-1}H^i_c(X;\QQ) \]
are the associated graded pieces of Deligne's weight filtration on the compactly-supported cohomology of $X$. For an $\bbS$-space $\calX$, we define the \textit{Serre characteristic}\footnote{To simplify exposition, we use a less refined version of the Serre characteristic than \cite{GetzlerPandharipande}.} as the generating series
\[ \mathsf{e}(\calX) := \sum_{n \geq 0} \mathsf{e}^{\bbS_n}(\calX(n)) \in \Lambda [\![ t]\!]. \]
Getzler and Pandharipande prove that the Serre characteristic \[\mathsf{e}: K_0(\mathsf{Var};\bbS) \to \Lambda [\![ t]\!] \]
is a ring homomorphism which takes the composition of $\bbS$-spaces (\ref{eqn:comp_defn}) into \textit{plethysm} of symmetric functions \cite[\S5]{GetzlerPandharipande}, defined as follows. First, set
\[F_1\Lambda[\![t]\!] \subset \Lambda [\![t]\!] \]
to be the vector subspace of elements with no constant term; i.e. the terms which do not have bidegree $(0,0)$. Plethysm is the operation \[\Lambda[\![ t ]\!] \times  F_1\Lambda[\![ t ]\!] \to \Lambda[\![ t ]\!] \] denoted by $\circ$, characterized by the following properties:
\begin{enumerate}
\item for any $f \in F_1\Lambda[\![ t ]\!]$, the map $\Lambda[\![ t ]\!] \to \Lambda[\![ t ]\!]$ by $g \mapsto  g \circ f$ is a $\QQ$-algebra homomorphism;
\item for any $n$ and $f \in F_1 \Lambda[\![t]\!]$, we have $p_n \circ f = \psi_n(f)$, where
\[ \psi_n: \Lambda[\![t]\!] \to \Lambda[\![t] \!] \]
is the algebra homomorphism defined by $t \mapsto t^n$ and $p_k \mapsto p_{kn}$ for all $k > 0$.
\end{enumerate}

\subsection{A relation in $K_0(\mathsf{Var};\bbS)$} We now define some important $\bbS$-spaces used in our proof of Theorem \ref{thm:chow_gen_fun}. Let $\Omega^\star\calM$ denote the $\bbS$-space defined by
\[ \Omega^\star\calM(n) = \begin{cases}
\varnothing &\text{if } n \leq 1\\
\Omega^\star\M_{0, n+1} &\text{if }n\geq 2 
\end{cases}\]
(recall that $\Omega^\star \M_{0, n+1}$ was defined in Definition \ref{defn:Omega_star}). We also define an $\bbS$-space $\mathcal{B}$ by
\[ \calB(n) = \begin{cases}
\varnothing &\text{if } n \leq 1\\
\calB_n &\text{if }n\geq 2.
\end{cases}\]
We also set the notation $\sigma_n$ for the class of $\Spec \C$ with trivial $\bbS_n$-action, considered as an $\bbS$-space supported in degree $n$. Theorem \ref{thm:chow_gen_fun} will be deduced from the following identity in $K_0(\mathsf{Var};\bbS)$.
\begin{thm}\label{thm:key_identity}
In $K_0(\mathsf{Var};\bbS)$, the identity
\[ [\calB] \circ (\sigma_1 + [\Omega^\star \M]) = [\calB] + [\C^\star][\calB] - [\Omega^\star\calM]  \]
holds.
\end{thm}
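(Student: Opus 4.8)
The plan is to reduce the identity to a stratum‑by‑stratum comparison of the level‑tree stratifications of the spaces $\calB_n$. First I would apply Lemma \ref{lem:sigma1plus} with $\calX = \calB$ and $\calZ$ the $\bbS$‑space with $\calZ(1) = \Spec\C$ and $\calZ(i) = \Omega^\star\calM_{0,i+1}$ for $i \geq 2$ (so $[\calZ] = \sigma_1 + [\Omega^\star\calM]$), which is legitimate since $\Omega^\star\calM(m) = \varnothing$ for $m \leq 1$. This rewrites the left‑hand side of Theorem \ref{thm:key_identity} as $[\calB] + [\calW]$, where
\[ [\calW(n)] = \sum_{j = 2}^{n - 1} \left[\left(\calB_j \times \coprod_{\substack{i_1 + \cdots + i_j = n\\ \text{some } i_\ell > 1}} \Ind_{\prod_k \bbS_{i_k}}^{\bbS_n} \prod_{k = 1}^{j} \calZ(i_k)\right)\Big/ \bbS_j\right]. \]
Thus the theorem becomes the assertion $[\calW] = [\C^\star][\calB] - [\Omega^\star\calM]$ in $K_0(\mathsf{Var};\bbS)$.

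Next I would stratify both sides. On the right, writing $\calB_n = \bigsqcup_{(\bT,\ell)} \calB_{(\bT,\ell)}$ over the isomorphism classes in $\Gamma_{0,n}^{\mathrm{lev},\star}$ (with $\bbS_n$‑orbits of strata assembled by induction from their stabilizers), the term $[\C^\star][\calB]$ becomes a sum of classes $[\C^\star \times \calB_{(\bT,\ell)}]$, while $[\Omega^\star\calM_{0,n+1}]$ is the class of the differential‑scaling $\C^\star$‑bundle over the length‑one stratum $\calB_n^\circ$. On the left, Lemma \ref{lem:stratum_factorization} writes each summand of $[\calW(n)]$ as a product of a stratum $\calB_{(\bT',\ell')}$ of some $\calB_j$, $2 \leq j \leq n-1$, with a product of copies of $\Omega^\star\calM_{0,\deg(v)}$, one per marking carrying an attached curve. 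The bridge is a bijection, obtained by \emph{peeling off the bottom level}: in a level tree $(\bT,\ell)$ of length $\geq 2$ the vertices at the maximal level $k$ are leaves, and deleting them while promoting each severed incoming node to a new marked point produces a shorter level tree $(\bT',\ell')$ on a set $\{0,1^\ast,\ldots,j^\ast\}$ — i.e.\ a stratum of $\calB_j$ — together with the data of which of the $j^\ast$ markings carries a deleted leaf, of what degree, and of how the labels $\{1,\ldots,n\}$ are distributed. By Lemma \ref{lem:stratum_factorization}, $\calB_{(\bT,\ell)}$ is precisely the quotient of $\calB_{(\bT',\ell')} \times \prod_{v \in V_k(\bT)} \Omega^\star\calM_{0,\deg(v)}$ by the rotation torus of the bottom level, a torus the composition does not divide out — this is the source of the $[\C^\star]$ factor on the right. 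Summing over the bijection, reconciling the two occurrences of an $\bbS_j$‑quotient, and splitting each $\calB_j$ in $[\calW]$ into its open stratum and boundary should recover $[\C^\star][\calB] - [\Omega^\star\calM]$; the $-[\Omega^\star\calM]$ term is what accounts jointly for the top‑dimensional stratum $\calB_n^\circ$ and for the gap between the \emph{trivial} $\C^\star$‑bundle over $\calB_n^\circ$ and $\Omega^\star\calM_{0,n+1}$.

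I expect the main obstacle to be carrying out this matching as an equality in $K_0(\mathsf{Var};\bbS_n)$, rather than merely of underlying varieties. An individual stratum $\calB_{(\bT,\ell)}$ is not $\bbS_n$‑stable, so one must work with classes induced from the tree‑automorphism stabilizers — iterated wreath products reflecting automorphisms of $\bT$ that permute isomorphic branches — and check that the identification, including the two $\bbS_j$‑quotients, is equivariant for these groups. Moreover the bottom‑level rotation torus does not visibly split $\prod_{v \in V_k(\bT)} \Omega^\star\calM_{0,\deg(v)}$ as a trivial equivariant $\C^\star$‑bundle, because the symmetric group acts nontrivially on the one‑dimensional spaces of differentials (cf.\ Lemma \ref{lem:bundlesoversmooth}); controlling the passage from $\big[\prod_v \Omega^\star\calM_{0,\deg(v)}\big]$ to $[\C^\star]\cdot\big[(\prod_v \Omega^\star\calM_{0,\deg(v)})/\C^\star\big]$, and extracting the correction $[\C^\star][\calB_n^\circ] - [\Omega^\star\calM_{0,n+1}]$ from the length‑two trees, is where the argument is subtlest. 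This is precisely the bookkeeping that the $\bbS$‑space formalism and the combinatorics of genus‑zero level graphs are designed to organize.
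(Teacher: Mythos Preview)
Your strategy—apply Lemma~\ref{lem:sigma1plus} and then match strata via peeling off the top level—is the paper's. The paper organizes it more cleanly: rather than applying Lemma~\ref{lem:sigma1plus} to all of $\calB$ at once and then stratifying, it first proves the identity for a single $\bbS_n$-orbit of level trees (Lemma~\ref{lem:pruningplethysm}), then sums over orbits of fixed length $k$ to obtain
\[ [\calB^{(k)}]\circ(\sigma_1+[\Omega^\star\M]) = [\calB^{(k)}] + [\C^\star][\calB^{(k+1)}], \]
and finally sums over $k\geq 1$, which telescopes to $[\calB] + [\C^\star]([\calB]-[\calB^{(1)}])$.

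This organization dissolves your equivariance worry: each orbit stratum $\calB_{[\bT,\ell]}$ is by construction $\bbS_n$-invariant, so Lemma~\ref{lem:sigma1plus} applied to it already yields an identity in $K_0(\mathsf{Var};\bbS)$, with no need to induce from wreath-product stabilizers or reconcile two $\bbS_j$-quotients. Where your proposal is confused is the source of the $-[\Omega^\star\M]$ correction. It is not extracted from length-two trees, and it requires no comparison of a trivial versus a nontrivial $\C^\star$-bundle: it is simply the missing $k=1$ term $[\C^\star][\calB^{(1)}]$ in the telescope, which equals $[\Omega^\star\M]$ directly by Lemma~\ref{lem:bundlesoversmooth}. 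Your concern about splitting off the bottom-level rotation torus equivariantly does arise, once, inside the proof of Lemma~\ref{lem:pruningplethysm} (the passage from $\calX(n)/\C^\star$ to $[\calX] = [\C^\star]\cdot[\calX/\C^\star]$), but the paper treats this as immediate from the freeness of the action together with the equivariant triviality in Lemma~\ref{lem:bundlesoversmooth}.
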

We now explain how Theorem \ref{thm:chow_gen_fun} follows from Theorem \ref{thm:key_identity}. Then the next section will be devoted to the proof of Theorem \ref{thm:key_identity}.
\begin{proof}[Proof of Theorem \ref{thm:chow_gen_fun}]
By \cite[Proposition 13]{Devkota}, we have $\CH^k(\mathcal{B}(n)) \cong H^{2k}(\mathcal{B}(n);\QQ)$ for all $k$, and the space $\calB(n)$ does not have any odd cohomology. Since $\mathcal{B}(n)$ is a smooth projective variety, $H^{2k}(\mathcal{B}(n);\QQ)$ is pure of weight $2k$. Therefore, we can write
\[ \mathsf{e}(\mathcal{B}) = \sum_{n \geq 2} \mathrm{H}_n(t^2) = \mathsf{B}|_{t \mapsto t^2}. \]
Moreover, by Lemma \ref{lem:bundlesoversmooth}, we have 
\[\mathsf{e}(\Omega^\star\M) = \mathsf{e}(\C^\star)\cdot\mathsf{M}|_{t \mapsto t^2}, \]
where $\mathsf{M}$ is defined as in the introduction by 
\[ \mathsf{M} = \sum_{n \geq 2} \sum_{i \geq 0} (-1)^{i} \ch_n(H^i_c(\M_{0, n+1};\QQ)) \cdot t^{i - n + 2}. \]
This holds because $H^i_c(\M_{0, n+1})$ is pure of weight $2(i - n + 2)$, since it is a $(n-2)$-dimensional complex hyperplane arrangement complement. Using $\mathsf{e}(\C^\star) = t^2 - 1$ and making the relevant substitutions into Theorem~\ref{thm:key_identity}, we find that
\[ \mathsf{B}|_{t \mapsto t^2} \circ (h_1 + (t^2 - 1) \mathsf{M}|_{t \mapsto t^2}) = \mathsf{B}|_{t \mapsto t^2} + (t^2 - 1)(\mathsf{B}|_{t \mapsto t^2} - \mathsf{M}|_{t \mapsto t^2}). \]
Rearranging, we obtain
\[ \mathsf{B}|_{t \mapsto t^2} \circ (h_1 + (t^2 - 1) \mathsf{M}|_{t \mapsto t^2}) + (t^2 - 1)\mathsf{M}|_{t \mapsto t^2} = t^2\mathsf{B}|_{t \mapsto t^2}.\]
Adding $h_1$ to both sides, we obtain Theorem \ref{thm:chow_gen_fun}, with the purely aesthetic difference that $t$ has been replaced by $t^2$.
\end{proof}

\section{Proof of Theorem \ref{thm:key_identity}}\label{sec:motivic_proof}
To prove Theorem \ref{thm:key_identity}, we use the stratification of $\calB_n$ by level trees. The basic idea is that the composition $[\calB] \circ (\sigma_1 + [\Omega^\star \M])$ on the left-hand side of Theorem \ref{thm:key_identity} reflects the combinatorial operation of adding either one or zero levels to a level tree, as we will make precise in this section.

Recall that  $\Gamma_{0,n}^{\mathrm{lev},\star}$ denotes the set of isomorphism classes of $(n + 1)$-pointed stable level trees, and $\bbS_n$ naturally acts on this set by permuting the markings $\{1, \ldots, n\}$.
\begin{defn}
  We use the notation $[\bT, \ell]$ for the $\bbS_n$-orbit of a level tree $(\bT, \ell)$. We define $\mathcal{B}_{[\bT,\ell]}$ to be an $\bbS$-space supported in degree $n$, where
\[ \mathcal{B}_{[\bT,\ell]}(n) \subset \calB(n) \]
is the $\bbS_n$-invariant subspace consisting of curves with level tree in the orbit $[\bT, \ell]$. 
\end{defn}
Now we define another $\bbS$-space $\Omega^\star \M_{aug}$.
\begin{defn}\label{defn:omega_star_aug}
Define $\Omega^\star\calM_{aug}$ by
\[ \Omega^\star\calM_{aug}(n) = \begin{cases}
    \Spec \C &\text{if }n=1 \\
    \Omega^\star\M(n) &\text{otherwise.}
\end{cases}   \]
\end{defn}
The reason for defining $\Omega^\star \M_{aug}$ is that in $K_0(\mathsf{Var};\bbS)$, we have the identity
\[ [\Omega^\star \M_{aug}] = \sigma_1 + [\Omega^\star \M].  \]

\subsection{Boxed powers of $\Omega^\star \M_{aug}$}\label{subsec:boxed_powers} Towards understanding the composition $[\calB \circ \Omega^\star \M_{aug}]$, we will first state how to think of each boxed power $\Omega^\star \M_{aug}^{\boxtimes r}$ as an auxiliary moduli space for each integer $r \geq 1$. Indeed, unwinding the definition of the box product (\ref{eqn:boxtimes}), we find that for each $n \geq 1$, we have that $\Omega^\star \M_{aug}^{\boxtimes r}(n)$ is the moduli space of ordered tuples
\[ (C_1, \ldots, C_r, f) \]
where
\begin{enumerate}
    \item $C_j$ is either 
    \begin{itemize}
        \item a multiscale differential with distinguished marking $q_j$, as well as $i_j$ additional non-distinguished markings $p_{k_1}, \ldots, p_{k_{i_j}}$, or
        \item a single point $p_j$ which we may treat as a non-distinguished marking.
    \end{itemize}
    \item If we let $M$ denote the set $\{p \mid p \in C_j \, \text{a non-distinguished marking  for some }j \}$, then 
    \[f: M \to \{1, \ldots, n\} \]
    is a bijection.
\end{enumerate} 
The $\bbS_n$-action on $\Omega^\star \M_{aug}^{\boxtimes r}(n)$ is induced by permutations of $\{1, \ldots, n\}$. Note that there is a natural $\bbS_r$-action on $\Omega^\star \M^{\boxtimes r}_{aug}$ by re-ordering the multiscale differentials. This $\bbS_r$-action appears in the definition of the composition operation (\ref{eqn:comp_defn}). 

\subsection{Pruning extremal vertices}
We now turn towards a combinatorial interpretation of the composition $[\calB \circ \Omega^\star \M_{aug}]$. Given a level tree $(\bT, \ell)$, we write $V^{\mathrm{ext}}(\bT)$ for the set of vertices of $\bT$ of the highest level, and set
\[r(\bT, \ell) := |V^{\mathrm{ext}}(\bT)|. \]
\begin{defn}
    For a level tree $(\mathbf{T}, \ell) \in \Gamma_{0, n}^{\mathrm{lev}, \star}$, we define the \textit{pruning} of $(\T, \ell)$ as a level tree \[(\T^{\mathrm{pr}}, \ell^{\mathrm{pr}}) \in \Gamma_{0, r(\T, \ell)}^{\mathrm{lev}, \star} \]by deleting the $r(\T, \ell)$ vertices in $V^{\mathrm{ext}}(\T)$ from $\T$ and replacing them with markings. The ordering of the markings on $(\T^{\mathrm{pr}}, \ell^{\mathrm{pr}})$ is the unique one which is compatible with the lexicographic ordering of the subsets of $\{1, \ldots, n\}$. The level structure $\ell^{\mathrm{pr}}$ is the restriction of the level structure~$\ell$.
\end{defn}
 Note that the pruning always has exactly one less level than the original level tree. See Figure~\ref{fig:pruning} for an example of a pruning.

\begin{figure}
    \centering
    \includegraphics[scale=1]{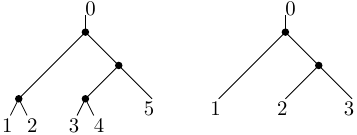}
    \caption{A level tree with $3$ levels on the left, and its pruning on the right. Here, a marking on a vertex $v$ is visualized by a labelled half-edge emanating from $v$.}
    \label{fig:pruning}
\end{figure} 

Now we define \[U([\T, \ell]) \subseteq \coprod_{k \geq 2} \Gamma_{0, k}^{\mathrm{lev, \star}} \]
to be the set of all level trees whose pruning is in the same $\bbS_n$-orbit as $(\T, \ell)$. The following lemma encodes the recursive structure obtained by pruning, at the level of $\bbS$-spaces.

\begin{lem}\label{lem:pruningplethysm}
In the Grothendieck ring of $\bbS$-spaces, the identity
\[[\calB_{[\bT, \ell]}] \circ (\sigma_1 + [\Omega^\star \M]) = [\calB_{[\T, \ell]}] + [\C^\star]\cdot \sum_{[\bT', \ell'] \in U([\bT, \ell])}[\calB_{[\bT', \ell']}]\]
holds.
\end{lem}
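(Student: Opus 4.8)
The goal is to prove Lemma~\ref{lem:pruningplethysm}, which expresses the recursive structure of the level-tree stratification under the pruning operation. The key geometric input is Lemma~\ref{lem:stratum_factorization}, which factors each stratum $\calB_{(\bT,\ell)}$ as a torus quotient of a product of trivial $\C^\star$-bundles over the moduli spaces $\M_{0,\deg(v)}$, together with Lemma~\ref{lem:bundlesoversmooth}, which identifies $\calB^\circ_m \cong \M_{0,m+1}$ and $\Omega^\star\M_{0,m+1} \cong \C^\star \times \M_{0,m+1}$. The strategy is to unwind the left-hand side $[\calB_{[\bT,\ell]}] \circ (\sigma_1 + [\Omega^\star\M])$ using the definition of composition~(\ref{eqn:comp_defn}) together with the identity $\sigma_1 + [\Omega^\star\M] = [\Omega^\star\M_{aug}]$, and show it matches the right-hand side stratum by stratum after regrouping by the combinatorics of pruning.

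\textbf{Key steps.} First, I would apply Lemma~\ref{lem:sigma1plus} with $\calX = \calB_{[\bT,\ell]}$ and $\calY = \Omega^\star\M$, so that $\calZ = \Omega^\star\M_{aug}$; this separates out the ``$j = n$'' term, which contributes exactly $[\calB_{[\bT,\ell]}]$ (the summand where no new components are attached), accounting for the first term on the right-hand side. Second, for the remaining sum I would interpret, using the description of $\Omega^\star\M_{aug}^{\boxtimes r}$ from \S\ref{subsec:boxed_powers}, the points of $(\calB_{[\bT,\ell]}(r) \times \Omega^\star\M_{aug}^{\boxtimes r}(n))/\bbS_r$ as a multiscale differential whose pruning lies in the orbit $[\bT,\ell]$: each of the $r = r(\bT,\ell)$ extremal vertices of a tree in $U([\bT,\ell])$ is filled in by either a single marked point (the $\sigma_1$/trivial factor) or a genuine multiscale differential glued along its distinguished marking. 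Third, I would match the torus quotients: in Lemma~\ref{lem:stratum_factorization} the top level of a tree $(\bT',\ell') \in U([\bT,\ell])$ contributes $\left(\prod_{v \in V^{\mathrm{ext}}(\bT')} \Omega^\star\M_{0,\deg(v)}\right)/\C^\star$, whereas the lower levels reassemble into a copy of $\calB_{[\bT,\ell]}$ — \emph{except} that the overall $\C^\star$-quotient in $\calB_{[\bT,\ell]}$ has been ``used up'' on the extremal level, so one recovers an extra free $\C^\star$-factor, which is the source of the $[\C^\star]$ prefactor on the right-hand side. Finally, I would check that summing over $[\bT',\ell'] \in U([\bT,\ell])$ on the right exactly exhausts the disjoint union over all ways of choosing which extremal vertices are trivial and what differentials sit on the others, so the two sides agree in $K_0(\mathsf{Var};\bbS)$; the lexicographic normalization of markings in the definition of pruning ensures the $\bbS_r$-quotient is taken correctly and no overcounting occurs.

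\textbf{Main obstacle.} The technical heart is the third step: tracking the torus actions. The stratum $\calB_{(\bT',\ell')}$ for $(\bT',\ell')$ of length $k+1$ is a quotient of a product of trivial $\C^\star$-bundles by a \emph{single} diagonal $\C^\star$ (not by $(\C^\star)^{k+1}$ — the level rotation torus acts, but the stratum isomorphism in Lemma~\ref{lem:stratum_factorization} records only one residual $\C^\star$). When I split off the top level and try to recognize the rest as $\calB_{[\bT,\ell]}$, I must verify that the diagonal $\C^\star$ acting on $\prod_{v \in V^{\mathrm{ext}}(\bT')} \Omega^\star\M_{0,\deg(v)}$ is free (stated in Lemma~\ref{lem:stratum_factorization}) and that quotienting the full product by the single diagonal $\C^\star$ is the same as first quotienting the top-level product by its own diagonal $\C^\star$ and then separately quotienting the lower-level product — which is \emph{not} literally true, so the correct bookkeeping is that one $\C^\star$ quotient kills the diagonal in the top level and the lower-level piece is left as the \emph{ungquotiented} product times the data of a multiscale differential on each lower component, i.e. a copy of $\calB_{[\bT,\ell]}$ with one extra $\C^\star$. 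Making this rearrangement rigorous in $K_0(\mathsf{Var};\bbS)$ — rather than as an isomorphism of spaces — while keeping the $\bbS_n$- and $\bbS_r$-equivariance straight is where the care is needed; the cut-and-paste relations in the Grothendieck group together with the freeness of the torus actions are exactly what make it go through.
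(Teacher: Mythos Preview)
Your overall strategy is exactly the paper's: apply Lemma~\ref{lem:sigma1plus} with $\calX = \calB_{[\bT,\ell]}$ and $\calZ = \Omega^\star\M_{aug}$ to split off the term $[\calB_{[\bT,\ell]}]$, then identify the remaining piece $\calX$ as a free $\C^\star$-space whose quotient is $\coprod_{[\bT',\ell']\in U([\bT,\ell])}\calB_{[\bT',\ell']}$, so that $[\calX] = [\C^\star]\cdot\sum[\calB_{[\bT',\ell']}]$.

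However, your third step and your ``main obstacle'' rest on a misreading of Lemma~\ref{lem:stratum_factorization}. You assert that the stratum is ``a quotient of a product of trivial $\C^\star$-bundles by a \emph{single} diagonal $\C^\star$,'' but the displayed formula there is a product over $j=0,\ldots,k$, and \emph{each} factor is separately quotiented by its own diagonal $\C^\star$: there are $k+1$ torus quotients, one per level. Consequently the bookkeeping is far cleaner than you fear. For $(\bT',\ell')$ of length $k+1$ with pruning in $[\bT,\ell]$, Lemma~\ref{lem:stratum_factorization} splits \emph{on the nose} as
\[
\calB_{(\bT',\ell')} \;\cong\; \calB_{(\bT^{\mathrm{pr}},\ell^{\mathrm{pr}})} \times \Bigl(\prod_{v \in V^{\mathrm{ext}}(\bT')} \Omega^\star\M_{0,\deg(v)}\Bigr)\big/\C^\star,
\]
with no interaction between the top-level quotient and the lower levels. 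The factor $[\C^\star]$ in the statement then arises for the simple reason that in $\calX$ the product $\prod_j \Omega^\star\M_{aug}(i_j)$ appears \emph{unquotiented}, whereas in $\calB_{(\bT',\ell')}$ the top level is quotiented by its $\C^\star$; freeness of that action gives $[\calX] = [\C^\star]\cdot[\calX/\C^\star]$. Your worry about a single $\C^\star$ being ``used up'' and having to rearrange quotients across levels is based on a false premise and should be discarded; once Lemma~\ref{lem:stratum_factorization} is read correctly, the matching is immediate.
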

\begin{proof}
The left-hand side is the class in $K_0(\mathsf{Var};\bbS)$ of
\[ \calB_{[\T, \ell]} \circ \Omega^\star\calM_{aug}, \]
where $\Omega^\star \M_{aug}$ is the $\bbS$-space defined in Definition \ref{defn:omega_star_aug}. Then Lemma \ref{lem:sigma1plus} implies that
\begin{equation}\label{eqn:comb_lemma_part1}
[\calB_{[\T, \ell]} \circ \Omega^\star \M_{aug}] = [\calB_{[\T, \ell]}] + \left[\calX \right]
\end{equation}
where $\calX$ is the $\bbS$-space such that
\begin{equation}\label{eqn:comb_lemma_Xdefn}
    \calX(n) = \left(\calB_{[\bT, \ell]}\left(r(\T, \ell)\right) \times \coprod_{\substack{{i_1 + \cdots + i_{r(\T, \ell)} = n}\\{\text{at least one }i_j > 1}}} \Ind_{\prod_{j} \bbS_{i_j}}^{\bbS_n} \prod_{j = 1}^{r(\T, \ell)}\Omega^\star\calM(i_j)\right)/\bbS_{r(\T, \ell)}
\end{equation}
The disjoint union appearing in (\ref{eqn:comb_lemma_Xdefn}) is a subspace of the moduli space $\Omega^\star \M_{aug}^{\boxtimes r(\T, \ell)}(n)$ described in \S\ref{subsec:boxed_powers}: it is exactly the subspace where at least one of the $C_1, \ldots, C_{r(\T, \ell)}$ is not a point (that is, we have at least one honest differential). Now generalizing Lemma \ref{lem:stratum_factorization}, we find that 
\begin{equation}\label{eqn:comb_lemma_Xquotient}
\calX(n)/\C^\star = \coprod_{[\T', \ell'] \in U([\T, \ell])} \calB_{[\T', \ell']}(n), 
\end{equation}
where the disjoint union is finite, because only finitely many $[\T', \ell'] \in U([\T, \ell])$ can have $\calB_{[\T', \ell']}(n) \neq \varnothing$. Using the interpretation in \S\ref{subsec:boxed_powers},
the right-hand side of (\ref{eqn:comb_lemma_Xdefn}) should be thought of as adding a single level to a multiscale differential in $\calB_{[\T, \ell]}$, and the (free) $\C^\star$-quotient on the left-hand side of (\ref{eqn:comb_lemma_Xquotient}) corresponds to rescaling the differentials at the new level. Considering all $n$ at once, we find
\begin{equation}\label{eqn:comb_lemma_Xclass}
    [\calX] = [\C^\star]\cdot\sum_{[\T', \ell'] \in U([\T, \ell])} [\calB_{[\T', \ell']}
    \end{equation}
in $K_0(\mathsf{Var}; \bbS)$.  Substituting (\ref{eqn:comb_lemma_Xclass}) into (\ref{eqn:comb_lemma_part1}), we obtain the desired statement.
\end{proof}

Using Lemma \ref{lem:pruningplethysm}, we can complete the proof of Theorem \ref{thm:key_identity}.
\begin{proof}[Proof of Theorem \ref{thm:key_identity}]
Let $\calB^{(k)} \subset \calB$ denote the $\bbS$-subspace parameterizing stable curves whose level tree has length $k$ (recall that the length of a level tree is the number of levels). Then by summing the equality of Lemma \ref{lem:pruningplethysm} over all $[\T, \ell]$ with length equal to $k$, we obtain
\[ [\calB^{(k)}] \circ (\sigma_1 + [\Omega^\star \M]) = [\calB^{(k)}] +  [\C^\star][\calB^{(k+1)}] \]
for $k \geq 1$. Now we sum over all $k \geq 1$ to find that
\[ [\calB] \circ (\sigma_1 + [\Omega^\star \M]) =[\calB] + [\C^\star]([\calB] - [\calB^{(1)}]).  \]
Theorem \ref{thm:key_identity} now follows from Lemma \ref{lem:bundlesoversmooth}, which implies that $[\C^\star][\calB^{(1)}] = [\Omega^\star\M]$.
\end{proof}

\section{Numerical applications}\label{sec:numerical}
We now show how to deduce Corollary \ref{cor:numerics} from Theorem \ref{thm:chow_gen_fun}. 
\begin{proof}[Proof of Corollary \ref{cor:numerics}]
The homomorphism
\[ \mathrm{rk}:\Lambda[\![ t]\!] \to \QQ[\![x, t]\!], \]
taking $p_1 \to x$ and $p_n \to 0$ for $n > 1$ satisfies
\[ \mathrm{rk}(\ch_n(V)) = \dim_{\QQ}(V)\frac{x^n}{n!} , \]
as can be seen directly from the definition of $\ch_n(V)$ in (\ref{eqn:frobenius_char_defn}). Moreover, one can verify that if $f, g \in \Lambda[\![ t]\!]$ and $g$ is in the ideal $(p_1, p_2, \ldots)$, then $\mathrm{rk}(f \circ g) = \mathrm{rk}(f) \circ \mathrm{rk}(g)$, where on the right $\circ$ denotes composition of power series in the variable $x$.
Applying this to Theorem \ref{thm:chow_gen_fun}, we obtain
\begin{equation}\label{eq:num_gen_fun}
(x + \mathrm{rk}(\mathsf{B})) \circ (x + (t - 1) \mathrm{rk}(\mathsf{M})) = x + t \cdot \mathrm{rk}(\mathsf{B}).
\end{equation}
Now define polynomials $\omega_n(t)$ inductively by setting $\omega_1(t) := 1$ and
\[ \omega_n(t) := (t - n + 2) \cdot \omega_{n - 1}(t) \]
for $n \geq 2$.
The coefficient of $x^n$ in $\mathrm{rk}(\mathsf{M})$ is
\[ \frac{1}{n!} \cdot\frac{\omega_n(t-1)}{t - 1}; \] this equality may be established e.g. by using the isomorphism \[\M_{0, n + 1} \cong \mathrm{Conf}_{n - 2}(\P^1 \smallsetminus \{0, 1, \infty\}) \]
and counting the $\F_q$-points of $\Conf_{n - 2}(\P^1 \smallsetminus \{0, 1, \infty\})$. Therefore
\[ x + (t-1)\mathrm{rk}(\mathsf{M}) = x + \sum_{n\geq 2} \frac{\omega_n(t-1)}{n!}\cdot x^n. \]
Since
\[x + \mathrm{rk}(\mathsf{B}) =  \sum_{n \geq 2} \mathrm{H}^{\mathrm{num}}_n(t) \cdot \frac{x^n}{n!}, \]
we obtain from (\ref{eq:num_gen_fun}) and Fa\`a di Bruno's formula the identity 
\begin{equation}\label{eqn:faadibruno}
t \cdot \mathrm{H}^{\mathrm{num}}_n(t) = \sum_{k = 1}^{n} \mathrm{Bell}_{n, k}(\omega_1(t-1), \ldots, \omega_{n - k + 1}(t-1)) \cdot \mathrm{H}_{k}^{\mathrm{num}}(t)
\end{equation}
for $n \geq 2$. Above, $\mathrm{Bell}_{n, k}(x_1, \ldots, x_{n - k + 1})$ is the \textit{partial exponential Bell polynomial}, defined by the formula
\begin{equation}\label{eqn:bell_defn}
\mathrm{Bell}_{n, k}(x_1, \ldots, x_{n - k + 1}) = n!\sum_{\substack{{\lambda \vdash n}\\{\ell(\lambda) = k}}} \prod_{i = 1}^{n - k + 1} \frac{x_i^{\lambda_i}}{i!^{\lambda_i} \lambda_i!}
\end{equation}
where for a partition $(1^{\lambda_1}, 2^{\lambda_2},\ldots)$ we write $\ell(\lambda) = \sum_i \lambda_i$; see \cite[\S3.3,\S3.4]{Comtet}. Subtracting $\mathrm{H}_n^{\mathrm{num}}(t)$ from both sides of (\ref{eqn:faadibruno}) we see that
\begin{equation}\label{eq:num_recursion}
(t - 1) \cdot \mathrm{H}^{\mathrm{num}}_n(t) = \sum_{k = 1}^{n - 1} \mathrm{Bell}_{n, k}(\omega_1(t-1), \ldots, \omega_{n - k + 1}(t-1)) \cdot \mathrm{H}_{k}^{\mathrm{num}}(t).
\end{equation}

Now the identity
\[ t^k \cdot\mathrm{Bell}_{n, k}(\omega_1(t-1), \ldots, \omega_{n - k + 1}(t-1)) = \sum_{j = k}^{n} s(n, j) S(j, k) t^{j} \]
from \cite[Theorem 2.1]{bellpoly} yields 
\[ (t-1)\cdot \mathrm{H}_n^{\mathrm{num}}(t) = \sum_{k = 1}^{n - 1} \mathrm{H}_{k}^{\mathrm{num}}(t)\cdot \sum_{j = k}^{n} s(n, j)S(j, k)t^{j - k}, \]
which is the first part of Corollary \ref{cor:numerics}. To obtain the second part of Corollary \ref{cor:numerics}, we divide both sides of (\ref{eq:num_recursion}) by $(t - 1)$, and then take the limit of both sides as $t \to 1$. We find
\begin{align*}\label{eqn:limitsofbell}
     \frac{\mathrm{Bell}_{n, k}(\omega_1(t-1), \ldots, \omega_{n - k + 1}(t-1))}{(t-1)} &=   n!\sum_{\substack{{\lambda \vdash n}\\{\ell(\lambda) = k}}} \frac{1}{(t - 1)}\prod_{i = 1}^{n - k + 1} \frac{\omega_i(t-1)^{\lambda_i}}{i!^{\lambda_i} \lambda_i!}\\&= n!\sum_{\substack{{\lambda \vdash n}\\{\ell(\lambda) = k}}} \frac{1}{\lambda_1 !} \cdot \frac{1}{(t - 1)}\prod_{i = 2}^{n - k + 1} \frac{(t-1)^{\lambda_i} \cdots (t - i + 1)^{\lambda_i}}{i!^{\lambda_i} \lambda_i!}.
\end{align*}
Direct inspection of the above expression shows that the summand corresponding to $\lambda \vdash n$ has nonzero limit as $t \to 1$ if and only if $\lambda = (1^{\lambda_1}, j^1)$ for some $j$. Moreover, since $\ell(\lambda) = k$, we must have $\lambda_ 1 + 1 = k$, so $\lambda_1 = k - 1$, and $j = n -k + 1$. Therefore we conclude that
\[\lim_{t \to 1} \frac{\mathrm{Bell}_{n, k}(\omega_1(t-1), \ldots, \omega_{n - k + 1}(t-1))}{(t-1)} = n! \cdot \frac{1}{(k - 1)!} \frac{(-1)^{n - k - 1}(n - k - 1)!)}{(n - k + 1)!}. \]
This expression, together with (\ref{eq:num_recursion}), implies the second part of Corollary \ref{cor:numerics}.
\end{proof}

\begin{rem}\label{rem:general_recursion}
	The recursion~\eqref{eq:num_recursion} for $\mathrm{H}_n^{\mathrm{num}}(t)$ also follows from the recursion in~\cite{JKUmotivic,FMSV24}: For a general matroid $M$ with Chow polynomial $\mathrm{H}^{\mathrm{num}}_M(t)$ we have
	\[
		 (t-1)\cdot\mathrm{H}^{\mathrm{num}}_M(t) = \sum_{\substack{F\in L(M)\\ F\neq \emptyset}}\chi_{M\mid_F}(t)\cdot\mathrm{H}^{\mathrm{num}}_{M/F}(t),
	\]
	where
    \begin{itemize}
        \item $L(M)$ is the lattice of flats of $M$,
        \item $\chi_M(t)$ is the characteristic polynomial of $M$, and
        \item $M|_F$ and $M/F$ are the restriction of $M$ to $F$ and the contraction of $M$ by $F$, respectively.
    \end{itemize}
	Specializing to the case of the braid matroid, we know that its lattice of flats coincides with the partition lattice $\Pi_n$, and its characteristic polynomial is $\chi_n(t)=\sum_{j=1}^n s(n,j)t^{j-1}=\omega_n(t-1)$.
	The interval in the partition lattice below a partition $\pi=(B_1,\dots,B_k)\in \Pi_n$, i.e., the restriction $B_n|_\pi$, is the product of smaller partition lattices on the blocks $B_j$.
	Similarly, the interval in this lattice above the partition  $\pi=(B_1,\dots,B_k)\in \Pi_n$, i.e., the contraction $B_n/\pi$, is the partition lattice on $k$ elements. Thus we obtain the recursion
	\begin{align*}
		(t-1)\cdot \mathrm{H}_n^{\mathrm{num}}(t) = \sum_{k=1}^{n-1}\mathrm{H}_k^{\mathrm{num}}(t)\sum_{(B_1,\dots,B_k)\in \Pi_n} \prod_{j=1}^{k}\omega_{|B_j|}(t-1).
	\end{align*}
	The partial Bell polynomial $B_{n,k}$ is a $(n-k+1)$-variate polynomial encoding the partitions of~$[n]$ into $k$ blocks where the variable $x_i$ indicates the presence of a block of size $i$.
	Thus, the last equation is equivalent to the recursion~\eqref{eq:num_recursion} described above.
\end{rem}

\section{Relative stable maps to $\P^1$}\label{sec:relmaps}
Fix now a tuple of integers \[\boldsymbol{x} = (x_1, \ldots, x_n) \in (\Z\smallsetminus \{0\})^n,\] such that
\[ \sum_{i = 1}^n x_i = 0.\]
Let $\M(\boldsymbol{x})$ denote the moduli space of maps \[f:(\P^1, q_1, \ldots, q_n) \to (\P^1, 0, \infty),\] where
\begin{itemize}
\item $q_i \in \P^1$ are distinct marked points, 
\item $f^{-1}(0) = \{q_i \mid x_i > 0\}$,
\item $f^{-1}(\infty) = \{q_i \mid x_i < 0\}$, and
\item $f$ ramifies with degree $|x_i|$ at $q_i$. 
\end{itemize}
Two such pointed maps are considered equivalent if they fit into a commuting square. In particular, two maps are equivalent when they differ by the $\C^\star$-action on $\P^1$; this target is sometimes called a \textit{rubber} $\P^1$. There is an isomorphism $\M(\boldsymbol{x}) \cong \M_{0, n}$, since a morphism $\P^1 \to \P^1$ is determined up to the $\C^\star$-action on the target by its zeros and poles.  Relative Gromov--Witten theory affords a modular compactification \[\M(\boldsymbol{x}) \hookrightarrow \Mbar(\boldsymbol{x}),\]
where $\Mbar(\boldsymbol{x})$ parameterizes so-called relative stable maps to a rubber $\P^1$; see e.g. \cite[\S2.2]{CMR} or \cite[\S 0.2.3]{FaberPandharipande} for a precise definition. In \cite{CMR}, Cavalieri, Markwig, and Ranganathan exhibit $\Mbar(\boldsymbol{x})$ as a tropical compactification of $\M_{0, n}$. Their work implies that there is a fiber square
\begin{equation}
    \begin{tikzcd}\label{eqn:toric_pullback}
    &\Mbar(\boldsymbol{x}) \arrow[d] \arrow[r] &X(\Sigma_{n, \boldsymbol{x}}) \arrow[d]\\
    &\Mbar_{0, n} \arrow[r] &X(\Sigma_{n})
\end{tikzcd}
\end{equation}
where 
\begin{itemize}
    \item $\Sigma_n$ is the \textit{Bergman fan} of the braid matroid $B_n$ with respect to the minimal building set,
    \item the fan $\Sigma_{n, \boldsymbol{x}}$ is a certain subdivision of $\Sigma_n$ determined by the vector $\boldsymbol{x}$ (this fan is denoted by $\Delta_{\boldsymbol{x}}^{\mathrm{rub}}$ in \cite{CMR}),
    \item $X(\Sigma)$ denotes the toric variety of a fan $\Sigma$, and
    \item the map $\Mbar_{0, n} \to X(\Sigma_n)$ is the tropical compactification established in \cite{Tevelev} and \cite{GibneyMaclagan}. 
\end{itemize}
The fan $\Sigma_n$ is isomorphic as a cone complex to the moduli space of $n$-pointed stable tropical curves of genus zero, and the subdivision $\Sigma_{n, \boldsymbol{x}} \to \Sigma_n$ has a pleasing modular interpretation in terms of tropical curves \cite[Proposition 16]{CMR}. We now specialize to a particular family of ramification vectors $\boldsymbol{x}$ for our theorem on relative stable maps.
\begin{defn}\label{defn:Rn}
    Assume $n \geq 2$. We define $\calR_n$ to be the moduli space $\Mbar(\boldsymbol{x})$ when \[\boldsymbol{x} = (n, \underbrace{-1, \ldots, -1}_{n}).\] 
\end{defn}
The topological Euler characteristic of $\calR_n$ was determined by the first author in \cite{KannanP1}. Via Corollary \ref{cor:numerics}, the following theorem determines the dimensions of the Chow groups of $\calR_n$.
\begin{thm}\label{thm:relative_stable_maps}
For each $n \geq 2$ and $k \geq 0$, there is an equality of dimensions
\[ \dim_{\QQ} \CH^k(\calR_n) = \dim_{\QQ} \CH^k(B_n). \]
\end{thm}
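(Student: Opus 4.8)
The plan is to show that $[\calR_n] = [\calB_n]$ in the Grothendieck ring of varieties $K_0(\mathsf{Var})$, and then invoke the Chow--K\"unneth generation property to upgrade this to the equality of Chow Betti numbers. Concretely, I would argue as follows. First, I would recall from \cite{KannanP1} the stratification of $\calR_n = \Mbar(\boldsymbol{x})$ with $\boldsymbol{x} = (n, -1, \ldots, -1)$ coming from the toric/tropical picture in diagram (\ref{eqn:toric_pullback}): the strata are indexed by the cones of the subdivided fan $\Sigma_{n, \boldsymbol{x}}$, equivalently by certain decorated genus-zero tropical curves, and each stratum is (up to a free torus quotient) a product of spaces of the form $\M_{0, m}$. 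I would then match this combinatorial index set with the level-tree index set $\Gamma_{0, n}^{\mathrm{lev}, \star}$ governing the stratification of $\calB_n$ from \S\ref{section:prelim}, and check that corresponding strata have the same class in $K_0(\mathsf{Var})$ — both being built out of the pieces $\M_{0, m}$ modulo the same torus actions, as in Lemma \ref{lem:stratum_factorization}. Summing over strata and using the scissor relations gives $[\calR_n] = [\calB_n]$.

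The cleanest way to organize the stratum comparison is probably not a bare-hands bijection of strata but rather a motivic recursion: I expect $\calR_n$ to satisfy an analogue of Theorem \ref{thm:key_identity} (or at least of the numerical identity (\ref{eq:num_gen_fun})), obtained by the same "add one level / prune the extremal vertices" argument applied to the tropical-curve stratification. Indeed, the subdivision $\Sigma_{n, \boldsymbol{x}} \to \Sigma_n$ for this particular $\boldsymbol{x}$ is designed so that expansions of the target rubber $\P^1$ correspond exactly to levels, and the distinguished point $q_0$ of the multiscale picture plays the role of the full-ramification point of weight $n$. So I would prove that the $\bbS$-space (or just the plain generating function, since we only need dimensions) assembled from $[\calR_n]$ satisfies the same functional equation and initial condition as the one assembled from $[\calB_n]$, hence the two agree term by term. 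Either route — direct stratum matching or matched recursions — yields $[\calR_n] = [\calB_n]$ in $K_0(\mathsf{Var})$.

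Finally, to pass from equality of Grothendieck classes to equality of Chow Betti numbers, I would invoke the Chow--K\"unneth generation property (CKgP) in the sense of \cite{CL}. As noted in the introduction, $\calB_n$ is smooth projective with the CKgP (this is how Theorem \ref{thm:chow_gen_fun} controls the Chow ranks), and $\calR_n$ also has the CKgP — it is smooth as a Deligne--Mumford stack and is built by toric/tropical blowups from $\Mbar_{0,n}$, for which the property is known and is inherited under the relevant operations. For spaces with the CKgP, the class in $K_0(\mathsf{Var})$ determines the classes of all Chow groups, so $[\calR_n] = [\calB_n]$ forces $\dim_\QQ \CH^k(\calR_n) = \dim_\QQ \CH^k(\calB_n)$ for all $k$, and then Theorem \ref{thm:equiv_iso} identifies the right-hand side with $\dim_\QQ \CH^k(B_n)$.

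The main obstacle I anticipate is the stratum-by-stratum comparison of $\calR_n$ with $\calB_n$: the two moduli problems are genuinely different (one is a smooth projective variety, the other only a smooth stack, and they are not isomorphic), so one must be careful that the torus quotients appearing in the tropical stratification of $\calR_n$ — expansions of the rubber target — really do contribute the same classes as the level-rotation torus quotients of Lemma \ref{lem:stratum_factorization}, including getting the combinatorial bookkeeping of markings and the role of the weight-$n$ point $q_0$ exactly right. Verifying that $\calR_n$ has the Chow--K\"unneth generation property is a secondary technical point, but should follow from the stability of the CKgP under blowups along CKgP-centers together with the known case of $\Mbar_{0,n}$.
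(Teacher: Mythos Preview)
Your proposal is correct and follows essentially the same approach as the paper: establish $[\calR_n] = [\calB_n]$ in $K_0(\mathsf{Var})$ via the level-tree stratification, then use the Chow--K\"unneth generation property to conclude. The paper takes your first route (direct stratum matching, not the recursion), citing \cite[Equation 2.14]{KannanP1} for the factorization of strata matching Lemma~\ref{lem:stratum_factorization}, and obtains the CKgP for both spaces directly from \cite[Lemma 3.4]{CL} rather than via stability under blowups.
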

\begin{proof}
Translating the definition of $n$-marked combinatorial rubber maps in \cite[Definition 2.5]{KannanP1} to the present setting, the moduli space $\calR_n$ is stratified by level trees, just as in the case of $\calB_n$. Via \cite[Equation 2.14]{KannanP1} the coarse moduli space corresponding to a level tree factors as a product as in Lemma \ref{lem:stratum_factorization}. In particular, this leads to an equality $ [\calR_n] = [\calB_n] $
of the classes of the two moduli spaces in the Grothendieck ring of varieties $K_0(\mathsf{Var})$. This equality can also be proved by analyzing the torus fibers of the vertical maps in (\ref{eqn:toric_pullback}).
Since both spaces are smooth and proper Deligne--Mumford stacks, it follows that
\[ \dim_{\QQ} H^{2k}(\calR_n;\QQ) = \dim_{\QQ}H^{2k}(\calB_n;\QQ) \]
for all $k \geq 0$. Both spaces have the so-called \textit{Chow--K\"unneth generation property} (\cite[Definition 3.1]{CL}) by \cite[Lemma 3.4]{CL}. This property implies that the cycle class map from the Chow to the cohomology ring is an isomorphism for both spaces, and the statement of the theorem follows.
\end{proof}

\begin{rem}
    By \cite[Theorem B]{KannanP1}, Theorem \ref{thm:relative_stable_maps} also holds if $\calR_n$ is replaced by any moduli space of relative stable maps $\Mbar(\boldsymbol{x})$ where $\boldsymbol{x} \in (\Z\smallsetminus \{0\})^{n + 1}$ has exactly one positive entry.
 \end{rem}

\begin{rem}
The equality of dimensions in Theorem \ref{thm:relative_stable_maps} cannot be upgraded to a space-level isomorphism: the moduli space $\calR_n$ is only smooth when viewed as a Deligne--Mumford stack, and its coarse moduli space has finite quotient singularities. On the other hand, $\calB_n$ is a smooth projective variety. We do expect that Theorem \ref{thm:relative_stable_maps} can be upgraded to an equality of characters of $\bbS_n$-representations. 
It is also natural to ask whether the equality of Chow groups in Theorem~\ref{thm:relative_stable_maps} can be upgraded to an isomorphism of Chow rings. We expect that the two rings are not isomorphic.
\end{rem}

 \printbibliography

\end{document}